\documentclass[12pt]{amsart}
\usepackage{amssymb, amsmath, latexsym, graphics, graphicx}

	\addtolength{\oddsidemargin}{-.875in}
	\addtolength{\evensidemargin}{-.875in}
	\addtolength{\textwidth}{1.75in}

\newcommand{\GL}{{\rm GL}}
\newcommand{\SL}{{\rm SL}}
\newtheorem{theorem}{Theorem}[section]
\newtheorem*{thm1}{Theorem A}
\newtheorem*{thm2}{Theorem B}
\newtheorem*{thm3}{Theorem C}
\newtheorem{lemma}[theorem]{Lemma}
\newtheorem{proposition}[theorem]{Proposition}
\newtheorem{corollary}[theorem]{Corollary}
\newtheorem{remark}[theorem]{Remark}
%\author{Karin Erdmann and Marianne Johnson}
\begin{document}
\date{May 2011}
\thanks{Supported by EPSRC grant EP/G025487/1}
\title{Lie powers of the natural module for $\GL(2,K)$}
\author{Karin Erdmann}
\address[K. Erdmann]{Mathematical Institute, 24--29 St Giles', Oxford, OX1 3LB, UK}
\email{erdmann@maths.ox.ac.uk}

\author{Marianne Johnson}
\address[M. Johnson]{School of Mathematics, University of Manchester,
Alan Turing Building, Manchester M13 9PL, UK}
\email{marianne.johnson@maths.man.ac.uk}
\numberwithin{equation}{section}
\baselineskip=16pt
\begin{abstract}
%%%%%%%%%%%%%%%%%%%
%Original abstract%
%%%%%%%%%%%%%%%%%%%
%Let $K$ be an infinite field of characteristic $p$.
%We prove that almost all indecomposable summands occurring in the $r$th tensor power of the natural $\GL(2,K)$-module also occur in the $r$th Lie power when
%$p=2$, and also for any $p$ in case $r=p^m$ or $2p^m$. Furthermore
%we prove that the indecomposable summand of the $r$-th tensor power of the natural module with highest weight $(r-1,1)$ is a summand of
%the $r$-th Lie power if and only if $r$ is a power of $p$.
%%%%%%%%%%%%%%%%%%%%%%
%Alternative abstract%
%%%%%%%%%%%%%%%%%%%%%%
In recent work of R. M. Bryant and the second author a (partial) modular analogue of Klyachko's 1974 result on Lie powers of the natural $\rm{GL}(n,K)$ was presented. There is was shown that nearly all of the indecomposable summands of the $r$th tensor power also occur up to isomorphism as summands of the $r$th Lie power provided that $r\neq p^m$ and $r \neq 2p^m$, where $p$ is the characteristic of $K$. In the current paper we restrict attention to ${\rm GL}(2,K)$ and consider the missing cases where $r = p^m$ and $r = 2p^m$. In particular, we prove that the indecomposable summand of the $r$th tensor power of the natural module with highest weight $(r-1,1)$ is a summand of the $r$th Lie power if and only if $r$ is a not power of $p$.
\end{abstract}
\maketitle
\section{Introduction}
Let $E$ denote the natural module for the general linear group $G=\GL(n,K)$ over an infinite field $K$ of characteristic $p \geq 0$. The isomorphism types of the indecomposable summands of $E^{\otimes r}$ are parameterized by (row) $p$-regular partitions of $r$ into at most $n$ parts. We denote these summands by $T(\lambda)$. For each $p$-regular partition $\lambda$ of $r$ let $D^{\lambda}$ denote the simple $KS_r$-module labelled by $\lambda$. Then
\begin{equation}\label{tensortilt}
E^{\otimes r} \cong \bigoplus d_\lambda T(\lambda),
\end{equation}
where $d_\lambda = \dim D^{\lambda}>0$ and the sum ranges over all $p$-regular partitions of $r$ into at most $n$ parts. (Here we use the notation $nV$ to denote $V \oplus \cdots \oplus V$.)
Now let $L^r(E)$ denote the $r$th homogeneous component of the free Lie algebra $L(E)$. We take $L(E)$ to be the Lie subalgebra of the tensor algebra of $E$ generated by $E$;  then $L^r(E)$ is a $KG$-submodule of $E^{\otimes r}$. Moreover, if $r$ is not divisible by $p$ then it is well known that $L^r(E)$ is a direct summand of $E^{\otimes r}$. Thus, for $p \nmid r$ we have
\begin{equation}\label{Lietilts}
L^r(E) \cong \bigoplus  l_\lambda T(\lambda),
\end{equation}
where the sum ranges over all $p$-regular partitions of $r$ into at most $n$ parts and $0 \leq l_\lambda \leq d_\lambda$. Donkin and Erdmann~\cite{DE} gave a formula describing the multiplicities $l_\lambda$ in terms of Brauer characters of the symmetric group $S_r$, as follows:
\begin{equation*}
l_\lambda = \frac{1}{r}\sum_{d \mid r} \mu(d) \beta^\lambda(\sigma^{r/d}),
\end{equation*}
where $\mu$ is the M{\"o}bius function, $\beta^\lambda$ is the Brauer character of $D^\lambda$ and $\sigma = (1 2 \cdots r) \in S_r$. One would like to be able to calculate the multiplicities $l_\lambda$, however, the Brauer characters are not known in general. In particular, it is difficult to determine from this formula alone which multiplicities are non-zero.

In characteristic zero, Klyachko~\cite{K} has shown that almost all of the irreducible $KG$-submodules of the $r$th tensor power $E^{\otimes r}$ also occur up to isomorphism as submodules of the $r$th Lie power $L^r(E)$. Since $E^{\otimes r}$ is completely reducible in this case we obtain that the multiplicities $l_\lambda$ occurring on the right-hand side of \eqref{Lietilts} are almost always positive. In the spirit of this result we would like to know, for arbitrary characteristic, which indecomposable summands of the $r$th tensor power also occur up to isomorphism as summands of $L^r(E)$. For modules $U$ and $V$ we write $U \mid V$ to mean that $U$ is isomorphic to a direct summand of $V$. Thus, by \eqref{tensortilt}, we would like to know for which $p$-regular partitions $\lambda$ of $r$ into at most $n$ parts we have $T(\lambda) \mid L^r(E)$.

When $K$ is an infinite field of prime characteristic $p$ Klyachko's original argument can be modified to prove a similar result for Lie powers of certain degree, see~\cite{BJ}. Unfortunately the methods used there do not work well when the degree is a power of $p$ or twice a power of $p$. Throughout this paper we shall restrict attention to the case where $K$ is an infinite field of prime characteristic $p$ and $G=\GL(2,K)$. We shall prove the following theorems.

\begin{thm1}
Let $K$ be an infinite field of characteristic $2$, $G = \GL(2,K)$ and let $E$ denote the natural $KG$-module. Let $r$ be a positive integer greater than $6$ and $\lambda$ a $2$-regular partition of $r$ into at most two parts.
\begin{itemize}
\item[(i)] If $r$ is not a power of $2$ then $T(\lambda) \mid L^r(E)$ if and only if $\lambda \neq (r)$.
\item[(ii)] If $r$ is a power of $2$ then $T(\lambda) \mid L^r(E)$ if and only if  $\lambda \neq (r), (r-1, 1)$.
\item[(iii)] Let $1 \leq t_1 <  t_2 <  \ldots < t_k$ be such that
 $r=2s_i+3t_i$ with $s_i \geq 1$. Then $\bigoplus_{i=1}^k E^{\otimes t_i} \mid L^r(E)$, considered as modules for $\SL(2,K)$.
\end{itemize}
\end{thm1}

Part (i) of the theorem is a special case of~\cite[Theorem 6.8]{BJ}. We give an alternative proof here, using a result of St\"{o}hr~\cite[Corollary 9.2]{S} on free Lie algebras of rank two in characteristic $2$. Part (ii) of the theorem deals with the cases not covered by~\cite[Theorem 6.8]{BJ} when $p=n=2$. Note that part (iii) ca be used to give a fairly large lower bound for the multiplicity of a given indecomposable tilting module $T(\lambda)$ as a direct summand of $L^r(E)$. The precise statement is given in Corollary \ref{bound-cor}. In Section 3 we lay the groundwork for the proof of Theorem A by exploiting~\cite[Corollary 9.2]{S}. The remainder of the proof then follows from the following theorem (to be proved in Section 4) in the case where $p=2$.

\begin{thm2}
Let $K$ be an infinite field of prime characteristic $p$, $G = \GL(2,K)$ and let $E$ denote the natural $KG$ module. Then $T(r-1,1)$ is a summand of $L^r(E)$ if and only if either $r=p$ or $r$ is not a power of $p$.
\end{thm2}

Furthermore, in odd characteristic we prove the following:

\begin{thm3} Let $K$ be an infinite field of odd characteristic $p$, $G=\GL(2,K)$ and
let $E$ be the natural $KG$ module. Let $r>p$ and let $\lambda$ be a partition of $r$ into at most two parts.\\
(i) If  $r=p^m$ with $p>3$ then
then $T(\lambda) \mid L^r(E)$ if and only if $\lambda \neq (r), (r-1,1)$.\\
(ii) Let $r=p^m$ with $p=3$ and suppose $\lambda \neq (r), (r-1, 1),
((r+1)/2, (r-1)/2)$. Then $T(\lambda) \mid L^r(E)$.\\
(iii) Let $r=2p^m$ with $p>3$ and suppose $\lambda  \neq (r), (p^m, p^m)$. Then $T(\lambda) \mid L^r(E)$.\\
(iv) Let $r=2p^m$ with $p=3$ and suppose $\lambda \neq (r), (p^m, p^m), (p^m+1, p^m-1), (p^m+2, p^m-2)$. Then $T(\lambda) \mid L^r(E)$.
\end{thm3}
This is proved by using a result of St\"{o}hr and Vaughan-Lee~\cite[Theorem 1]{VLS}.

\section{Polynomial representations of $\GL(2,K)$}
\label{poly}
Let $K$ be an infinite field of prime characteristic $p$, $n$ a fixed positive integer, and let $G=\GL(n,K)$. We begin by recalling a few facts about polynomial $KG$-modules (see~\cite{G} for further reference) and then quickly specialize to the case $n=2$.

\medskip
{\bf 2.1 } \  Let $E$ denote the natural $KG$-module. Then $E$ is a polynomial module of degree 1. If $V$ is a polynomial module of degree $r$ and $W$ is a polynomial module of degree $s$ then $V \otimes W$ is a polynomial module (with diagonal action) of degree $r+s$. Every submodule and every quotient of a polynomial module of degree $r$ is polynomial of degree $r$. Thus we see that $E^{\otimes r}$ and $L^r(E)$ are polynomial modules of degree $r$.

Let $\Lambda(n,r)$ denote the set of unordered partitions of $r$ into at most $n$ parts. Any polynomial module $V$ of degree $r$ can be written as a direct sum of weight spaces over $K$,
$$
V = \bigoplus_{\alpha \in \Lambda(r)}  V^\alpha,
$$
where $V^\alpha$ is the $K$-vector space
of all $v$ such that
$tv = t_1^{\alpha_1}t_2^{\alpha_2}\ldots t_n^{\alpha_n}v$
with
$t\in G$ diagonal with $i$th diagonal entry $t_i$.
%, \mbox{ for all } t =\left(\begin{array}{cc} t_1 &  0\\0 & t_2 \end{array}\right)\in G\}$.
%For $\alpha=(\alpha_1, \alpha_2) \in \Lambda(r)$ let $\sigma(\alpha) = (\alpha_2, \alpha_1)$. Then $V^{\alpha} \cong V^{\sigma(\alpha)}$ as vector spaces.
Let $\Lambda^+(n, r)$ denote the set of (ordered) partitions of $r$ into at most $n$ parts. The simple polynomial modules of degree $r$ are indexed by $\Lambda^+(n, r)$ and we denote the simple module labelled by the partition $\lambda$ by $L(\lambda)$. To every partition $\lambda \in \Lambda^+(n,r)$ let $\Delta(\lambda)$ denote the \emph{Weyl module} with unique simple quotient $L(\lambda)$ (see~\cite[(5.3a),(5.3b) and (5.4b)]{G}). All other composition factors of $\Delta(\lambda)$ are isomorphic to modules of the form $L(\mu)$ where $\mu < \lambda$ with respect to the dominance ordering. Let $\nabla(\lambda)$ denote the contravariant dual of $\Delta(\lambda)$. We say that a finite-dimensional $KG$-module admits a \emph{Weyl filtration} if it has a filtration in which every section is isomorphic to a Weyl module. Similarly we say that a finite-dimensional $KG$-module admits a \emph{dual Weyl filtration} if it has a filtration in which every section is isomorphic to a dual Weyl module.

The category of polynomial $KG$-modules of degree $r$ is equivalent to the module category of the Schur algebra $S(n,r)$ (for details see~\cite{G}). This has the advantage that there are finite-dimensional
injective modules. This category is also a highest weight category in the sense of~\cite{CPS}, with weight poset given by $\Lambda^+(n,r)$ with respect to the dominance ordering. The Weyl modules are the `standard modules' and the dual Weyl modules the `costandard modules'. Equivalently, the Schur algebra $S(n,r)$ is quasi-hereditary.
By a theorem of Ringel~\cite{R}, and also Donkin~\cite{D} for the algebraic group setup,  each highest weight category
has a class of `canonical' modules $T(\lambda)$ indexed by dominant weights.
These are the indecomposable polynomial modules of degree $r$ admitting both a Weyl filtration and a dual Weyl filtration. We shall call the modules $T(\lambda)$ the \emph{indecomposable tilting modules} of degree $r$.
Every Weyl filtration of $T(\lambda)$ contains exactly one section isomorphic to $\Delta(\lambda)$; all other sections are isomorphic to $\Delta(\mu)$, where $\mu < \lambda$. In fact, we have something stronger:

\begin{remark}\label{weight} \normalfont \ The $\lambda$-weight space of $T(\lambda)$ is one-dimensional, and the
submodule generated by a weight vector of weight $\lambda$ is isomorphic
to $\Delta(\lambda)$. This is the unique submodule of $T(\lambda)$ isomorphic
to $\Delta(\lambda)$.
\end{remark}

We shall say that a polynomial module is a \emph{tilting module} if it is isomorphic to a direct sum of indecomposable tilting modules. An important property is that the tilting modules are closed under tensor products; this
follows from the fact that tensor products of Weyl modules
(respectively dual Weyl modules) have Weyl filtration (respectively dual Weyl filtration). Thus if $V$ and $W$ are tilting modules of degree $r$ and $s$ then $V \otimes W$ is a tilting module of degree $r+s$.

\medskip
{\bf 2.2 } \ \label{n=2} From now on we assume $n=2$ so that $G=\GL(2,K)$. We fix the  basis $\{x,y\}$ of $E$ such that, when $E$ is identified with the column space, the vectors $x, y$ are identified with ${1\choose 0}$ and ${0\choose 1}$ respectively.

To simplify notation we shall write $\Lambda^+(r)$ to mean $\Lambda^+(2,r)$. Recall that a partition is (row) $p$-regular if no $p$ parts are equal. We write $\Lambda^+_p(r)$ for the set of  $p$-regular partitions in $\Lambda^+(r)$. Thus for $p>2$ we have $\Lambda^+_p(r)=\Lambda^+(r)$, whilst $\Lambda^+_2(r)$ consists of all partitions $\lambda = (\lambda_1, \lambda_2)$ of $r$ with $\lambda_1 >\lambda_2$.
The isomorphism types of the indecomposable summands of $E^{\otimes r}$ are given by the $T(\lambda)$ where $\lambda \in \Lambda^+_p(r)$, as described in \eqref{tensortilt}.
%For each $\lambda \in \Lambda^+_p(r)$ let $d_\lambda$ denote the dimension of the simple $KS_r$-module labelled by $\lambda$. Then
%\begin{equation}
%\label{tensortilt}
%E^{\otimes r} \cong \bigoplus_{\lambda \in \Lambda^+_p(r)} d_\lambda T(\lambda).
%\end{equation}
Thus we would like to know for which $\lambda \in \Lambda^+_p(r)$ we have $T(\lambda) \mid L^r(E)$.

It will often be convenient to work with the subgroup $H=\SL(2,K)$. Let $\mathcal{P}_r$ denote
the category of $KH$-modules which are restrictions of polynomial $KG$-modules of degree $r$. Then $\mathcal{P}_r$ is also a highest weight category. We identify the set of dominant weights with the set $$W_r=\{m \in \mathbb{N}_0: 0 \leq m \leq r, m \equiv r \bmod2\}.$$
The simple  modules in $\mathcal{P}_r$ are then indexed by $W_r$ and we denote these by $L(m)$.
Similarly, we denote the Weyl modules, dual Weyl modules and indecomposable tilting modules in $\mathcal{P}_r$ by $\Delta(m)$, $\nabla(m)$ and $T(m)$ respectively. We say that a module in $\mathcal{P}_r$ is a tilting module if it is isomorphic to a direct sum of indecomposable tilting modules.

If $\lambda=(\lambda_1, \lambda_2) \in \Lambda^+(r)$ then the simple module $L(\lambda)$ of $\GL(2,K)$ restricts to  $L(m)$, where $m=\lambda_1 - \lambda_2$. Similarly, $\Delta(\lambda), \nabla(\lambda)$ and $T(\lambda)$ restrict to $\Delta(m), \nabla(m)$ and $T(m)$, respectively.
Suppose $M$ is any $S(2,r)$-module whose restriction to
$\SL(2,K)$ is isomorphic to $L(m)$, or $\Delta(m)$, or $\nabla(m)$, or $T(m)$.
Then $m\leq r$ and $m\equiv r \bmod 2$. Moreover
there is a unique partition $\lambda(m) \in \Lambda^+(r)$
 such that $M$ is isomorphic to $L(\lambda(m))$, or $\Delta(\lambda(m))$, or
$\nabla(\lambda(m))$ or $T(\lambda(m))$ (see
~\cite[3.2.7]{D-LN}). We note that the restriction of \eqref{tensortilt} to $\SL(2,K)$ is given by $E^{\otimes r} \cong \bigoplus d_\lambda T(m)$,
where the sum ranges over all $m$ in the set
\begin{equation}
\label{A}
A_r = \{ k \in \mathbb{N}: 0<k\leq r \mbox{ and } k \equiv r \bmod 2\}.
\end{equation}

We summarize some properties of the modules $T(m)$ and $\Delta(m)$ (see~\cite{D}, ~\cite{X} and ~\cite{W} for further details).
For a $KG$- or $KH$-module $M$, we denote its Frobenius twist by $M^F$.

\begin{itemize}
\item[(2.2a)] For any $m\geq 0$, $\nabla(m)$ is isomorphic to the
$m$-th symmetric power giving $$\dim \Delta(m)=\dim \nabla(m)=m+1.$$
\item[(2.2b)] Let $m$ be a non-negative integer. Then $$T(m) \cong \Delta(m) \cong \nabla(m) \cong L(m)$$ if and only if either $m=0$ or $m=ap^k -1$, where $2\leq a \leq p$ and $k \geq 0$ (see ~\cite{W}). In particular,  $E=\Delta(1) \cong \nabla(1) \cong L(1) \cong T(1)$ and $\Delta(0)=K$ as $SL(2,K)$-modules.
\item[(2.2c)] If $m=kp+i$ where $0\leq i\leq p-2$ and $k \geq 1$ then $$T(m) \cong T(k-1)^F \otimes T(p+i).$$
\item[(2.2d)] If $m=kp+(p-1)$ where $k \geq 0$ then $T(m) \cong T(k)^F \otimes T(p-1)$. If $T(k)$ is simple then so is $T(m)$.
\item[(2.2e)] For all $i,j \geq 0$ satisfying $i+j=p-2$ there is a short exact sequence of $\SL(2,K)$-modules given by
$$
0 \rightarrow \Delta(p+j) \rightarrow T(p+j) \rightarrow \Delta(i) \rightarrow 0.
$$
\item[(2.2f)] For all $m,n\geq0$ with $m\geq n$ the tensor product $\Delta(m) \otimes \Delta(n)$ has a Weyl filtration with sections $$\Delta(m+n), \Delta(m+n-2), \ldots, \Delta(m-n).$$
\item[(2.2g)] For all $m,n\geq0$, $T(n+m)$ is a direct summand of $T(n) \otimes T(m)$.
\item[(2.2h)] If $0\leq i, j\leq p-2$ and $i+j=p-2$ then for
any $n\geq 1$ there is a short exact sequence
$$0\to \Delta(n-1)^F\otimes L(j) \to \Delta(pn+i) \to \Delta(n)^F\otimes L(i)
\to 0.
$$
\end{itemize}
%\begin{lemma}
%\label{Deltap}
%The module $\Delta(p) \otimes \Delta(p-1)$ is a tilting module for $\SL(2,K)$.
%\end{lemma}
%\begin{proof}\verb=[To be written]=
%\end{proof}

\section{Lie powers of the natural module in characteristic $2$}
In this section we lay the groundwork for the proof of Theorem A. The following result of St\"{o}hr on free Lie algebras of rank two in characteristic two will be a key ingredient of our proof.

\begin{theorem}~\cite[Corollary 9.2]{S}
\label{Ralph}
Let $K$ be a field of characteristic $2$, $G$ a group and let $V$ be a two-dimensional $KG$-module.
For all $r\geq4$ there is a direct sum decomposition of $L^r(V)$ as a $KG$-module:
$$
L^r(V)= \bigoplus_{s+t \geq 1} m_{s,t} L^{r/(2s+3t)}(R^2(V)^{\otimes s} \otimes R^3(V)^{\otimes t}),
$$
where $L^{r/d}(X)=0$ if $d \nmid r$ and $m_{s,t} = \frac{1}{s+t} \sum_{d \mid s,t} \mu(d) \frac{\left((s+t)/d\right)!}{\left(s/d\right)!\left(t/d\right)!}$.
\end{theorem}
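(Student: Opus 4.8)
The plan is to derive the decomposition from iterated Lazard elimination, the input specific to characteristic $2$ being an identification of the ``generating data'' of the free Lie algebra on a two‑dimensional module. \emph{Reduction step.} I would use Lazard elimination in its module form: for graded $KG$‑modules $A,B$ one has a $KG$‑module decomposition $L(A\oplus B)=L(A)\oplus L\big(\bigoplus_{n\ge0}A^{\otimes n}\otimes B\big)$, with each $A^{\otimes n}\otimes B$ placed in the appropriate internal degree, and iterating this (the argument being $G$‑equivariant throughout) gives a decomposition $L(A\oplus B)=\bigoplus_{w}L(M_w)$, the sum ranging over Lyndon words $w$ in two letters $a,b$, where $M_w$ is the tensor module obtained from $w$ by substituting $A$ for each $a$ and $B$ for each $b$, placed in internal degree equal to the weight of $w$. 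Since tensor products are symmetric, $M_w\cong A^{\otimes s}\otimes B^{\otimes t}$ depends only on the content $(s,t)$ of $w$, and the number of Lyndon words of content $(s,t)$ is exactly $m_{s,t}$ (Witt's bigraded count). Taking $A=R^2(V)$ in degree $2$ and $B=R^3(V)$ in degree $3$, the degree‑$r$ component of $L\big(R^2(V)\oplus R^3(V)\big)$ is then precisely $\bigoplus_{s+t\ge1}m_{s,t}\,L^{r/(2s+3t)}\!\big(R^2(V)^{\otimes s}\otimes R^3(V)^{\otimes t}\big)$, so the theorem becomes equivalent to a graded $KG$‑module isomorphism $L^r(V)\cong L^r\!\big(R^2(V)\oplus R^3(V)\big)$ for all $r\ge4$.

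\emph{The characteristic‑$2$ heart (the main obstacle).} One cannot take $R^2(V)$ to be the degree‑$2$ part of $L(V)$, which is just the one‑dimensional $\Lambda^2V$: the correct $R^2(V)$ is three‑dimensional, and the extra room is supplied by the squaring operation available only in characteristic $2$. I would pass to the free restricted Lie algebra $L_2(V)$, whose degree‑$2$ component is a three‑dimensional module — this is $R^2(V)$, spanned by $[x,y]$ and the squares $x^{[2]},y^{[2]}$, and built from $\Lambda^2V$ and the Frobenius twist $V^F$ — and whose degree‑$3$ component is $R^3(V)=L^3(V)\cong\Lambda^2V\otimes V$. The crux is to exhibit an explicit homogeneous free generating set, concentrated in degrees $2$ and $3$, for $L(V)$ together with the degree‑$2$ copy of $V^F$ coming from the $2$‑power elements of $L_2(V)$: one writes down candidate generators and shows — using the identities special to characteristic‑$2$ (restricted) Lie algebras, above all the collapse forced by $\dim\Lambda^2V=1$, so that $[c,c]=0$ for $c=[x,y]$, and the reduction of iterated adjoint actions to $2$‑power maps — that every homogeneous element of $L^r(V)$ with $r\ge4$ is a Lie polynomial in these degree‑$2$ and degree‑$3$ generators, with no new free generators appearing in higher degree. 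This is the step I expect to be genuinely hard, and it is what confines the phenomenon to characteristic $2$: in characteristic $0$ or odd characteristic the analogous collapse fails, so one would need free generators in infinitely many (or in $p^m$‑ and $2p^m$‑) degrees — which is precisely why \cite{BJ} has to exclude those cases.

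\emph{Verification.} To conclude that the constructed map is an isomorphism I would make a dimension check: a short computation with Witt's formula shows that the Poincar\'e series of $L(V)$ and of the free Lie algebra on three generators of weight $2$ and two generators of weight $3$ agree in every degree $\ge3$ (they differ by $2z-2z^2$, which accounts exactly for the degree‑$1$ summand $V$ and the degree‑$2$ twist $V^F$). Since the free generating set of the previous step matches the two sides as $KG$‑modules, and not merely as graded vector spaces, this establishes $L^r(V)\cong L^r\!\big(R^2(V)\oplus R^3(V)\big)$ for $r\ge4$, and the reduction step rewrites this as the sum over $(s,t)$ appearing in the statement.
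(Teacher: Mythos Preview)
The paper does not prove this theorem at all: it is quoted as \cite[Corollary~9.2]{S} and used as a black box, with the only post-statement commentary being the remark that $R(V)$ denotes the free restricted Lie algebra and that the $m_{s,t}$ are the bigraded Witt numbers. There is therefore no in-paper argument to compare your proposal against.

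For what it is worth, your outline is a reasonable reconstruction of how such a decomposition is obtained. The reduction step is exactly right: iterated Lazard elimination on $L(A\oplus B)$ with $A$ in degree~$2$ and $B$ in degree~$3$ produces the displayed sum with multiplicities $m_{s,t}$, so the theorem is equivalent to a graded $KG$-isomorphism $L^{\,r}(V)\cong L^{\,r}(R^2(V)\oplus R^3(V))$ for $r\ge 4$. Your identification of the characteristic-$2$ input is also on target: the key identity in $T(V)$ is $[a,x^2]=[[a,x],x]$ (and likewise with $y$), which forces every bracket of the five candidate generators $[x,y],\,x^2,\,y^2,\,[[x,y],x],\,[[x,y],y]$ of total degree $\ge 3$ to land back in $L(V)$, so the Lie subalgebra they generate has degree-$r$ part contained in $L^r(V)$ for $r\ge 3$; the Poincar\'e-series comparison you describe (difference $2z-2z^2$) then upgrades containment to equality and simultaneously shows freeness on those generators. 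One point you might tighten: the phrase ``free generating set for $L(V)$ together with the degree-$2$ copy of $V^F$'' is ambiguous, since $x^2,y^2\notin L(V)$; it is cleaner to say that the Lie subalgebra of $T(V)$ generated by $R^2(V)+R^3(V)$ is free on those generators and coincides with $L(V)$ in each degree $\ge 3$. This is essentially the content of St\"ohr's argument in \cite{S}, which proceeds via elimination in the free restricted Lie algebra; the present paper simply imports the conclusion.
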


Here $R(V)$ is the free restricted Lie algebra on $V$.
Notice that the multiplicity $m_{s,t}$ is equal to the dimension of the subspace of $L^{s+t}(V)$ spanned by the monomials of multidegree $(s,t)$, given by Witt's dimension formula (see~\cite[Theorem 5.11]{M} for example). In particular, these multiplicities are all positive.

We apply Theorem \ref{Ralph} in the case where $K$ is an infinite field of characteristic $2$, $G=\GL(2,K)$ and $V=E$ is the natural $KG$-module. Let $\{x,y\}$ be the basis of $E$ as defined in Section \ref{n=2}. We claim that the modules $R^2(E)$ and $R^3(E)$ occurring in this decomposition can be identified with certain Weyl modules.
By definition $\Delta(2,0)$ is the submodule of $E^{\otimes 2}$ generated by $x \otimes x$ (see~\cite[(5.3b)]{G}). It has basis $\{x \otimes x, y \otimes y, x \otimes y + y\otimes x\}$. Since the characteristic of $K$ is two it then follows that $R^2(E) \cong \Delta(2,0)$.
Similarly, $\Delta(2,1)$ is the submodule of $E^{\otimes 3}$ generated by $x \otimes x \otimes y + y \otimes x \otimes x$. It has basis $\{x \otimes x \otimes y + y \otimes x \otimes x, y \otimes y \otimes x + x \otimes y \otimes y\}$. Since $K$ has characteristic $2$ we have $R^3(E) = L^3(E)$ with basis given by $\{[x,y,x], [x,y,y]\}$. Thus it is easy to see that $R^3(E) \cong \Delta(2,1)$.

Let $D_{s,t} = \Delta(2,0)^{\otimes s} \otimes \Delta(2,1)^{\otimes t}$. Then Theorem \ref{Ralph} yields
\begin{equation}
\label{Ralpheq}
L^r(E) \cong L^{r/2}(D_{1,0}) \oplus L^{r/3}(D_{0,1})  \oplus \bigoplus_{s,t \geq 1} m_{s,t} L^{r/(2s+3t)}(D_{s,t}),
\end{equation}
for all $r\geq 4$.
Since the multiplicities occurring on the right-hand side of \eqref{Ralpheq} are all positive we see that $D_{s,t}$ is isomorphic to a direct summand of $L^{2s+3t}(E)$ for all $s,t \geq 1$. We shall show that each such summand $D_{s,t}$ is a tilting module for $G$. Let $\Delta_{s,t}$ denote the restriction of $D_{s,t}$ to $\SL(2,K)$. Thus $\Delta_{s,t} = \Delta(2)^{\otimes s} \otimes \Delta(1)^{\otimes t}$. We shall soon see that it is enough to show that $\Delta_{s,t}$ is a tilting module for $\SL(2,K)$.

%By (2.2f) $\Delta_{1,1}$ has Weyl filtration with sections $\Delta(3)$ and $\Delta(1)$. Now, since $\Delta(1) \cong E$ and $\Delta(2) \cong R^2(E)$ we may restrict our attention to the product $R^2(E) \otimes E$. This has basis $\{x^2 \otimes x, x^2 \otimes y, y^2 \otimes x, y^2 \otimes y, [x,y]\otimes x, [x,y] \otimes y\}$. It is then easy to verify that $\{[x,y] \otimes x. [x,y] \otimes y\}$ spans a two-dimensional module isomorphic to $E \cong \nabla(1)$ with quotient isomorphic to $\nabla(3)$. Thus $\Delta_{1,1}$ has both a Weyl filtration and a dual weyl filtration, giving that $\Delta_{1,1}$ is a tilting module. By (2.2b) $\Delta(1) \cong \nabla(1) \cong T(1)$ and $\Delta(3) \cong \nabla(3) \cong T(3)$ and it then follows that
\begin{lemma}
\label{deltatilt}
Let $K$ be an infinite field of characteristic $2$, let $H=\SL(2,K)$ and let $E$ denote the
natural $KH$-module. For each $s,t \geq1$ let $\Delta_{s,t}$ denote the $KH$-module defined by $\Delta_{s,t}= \Delta(2)^{\otimes s} \otimes \Delta(1)^{\otimes t}$. Then $\Delta_{s,t} \mid E^{\otimes 2s+t}$.
\end{lemma}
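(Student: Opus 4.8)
The plan is to reduce the general statement to the single case $s=t=1$, namely $\Delta(2)\otimes E \mid E^{\otimes 3}$, and then to propagate this by a short formal argument. For the reduction, note that since $s\geq 1$ and $t\geq 1$ we may regroup tensor factors to obtain
$$
\Delta_{s,t}=\Delta(2)^{\otimes s}\otimes E^{\otimes t}\;\cong\;\Delta(2)^{\otimes(s-1)}\otimes(\Delta(2)\otimes E)\otimes E^{\otimes(t-1)}.
$$
Hence, if $\Delta(2)\otimes E\mid E^{\otimes 3}$, then $\Delta_{s,t}$ is isomorphic to a direct summand of $\Delta(2)^{\otimes(s-1)}\otimes E^{\otimes 3}\otimes E^{\otimes(t-1)}=\Delta(2)^{\otimes(s-1)}\otimes E^{\otimes(t+2)}=\Delta_{s-1,\,t+2}$ (interpreting $\Delta_{0,u}$ as $E^{\otimes u}$). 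Iterating this $s$ times and using transitivity of $\mid$, we get $\Delta_{s,t}\mid \Delta_{0,\,t+2s}=E^{\otimes t+2s}=E^{\otimes 2s+t}$, as required. The hypothesis $t\geq 1$ is genuinely needed in order to peel off one copy of $E$: by (2.2b) the module $\Delta(2)$ is not isomorphic to $T(2)=E^{\otimes 2}$, so $\Delta(2)$ by itself is not a direct summand of $E^{\otimes 2}$.

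It remains to establish the base case $\Delta(2)\otimes E\mid E^{\otimes 3}$. By (2.2e) with $p=2$ there is a short exact sequence $0\to\Delta(2)\to T(2)\to\Delta(0)\to 0$, and $T(2)\cong E\otimes E$: indeed $T(2)$ is a direct summand of $E^{\otimes 2}$ by (2.2g), while $T(2)\not\cong\Delta(2)$ by (2.2b), so $\dim T(2)>\dim\Delta(2)=3$ and therefore $T(2)=E^{\otimes 2}$. Tensoring the above sequence with $E=\Delta(1)=T(1)$ yields a short exact sequence
$$
0\to\Delta(2)\otimes E\to E^{\otimes 3}\to E\to 0 ,
$$
and I claim this splits. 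By (2.2f) (with $m=2$, $n=1$), $\Delta(2)\otimes E$ has a Weyl filtration with sections $\Delta(3)$ and $\Delta(1)$; since $\Delta(3)=L(3)$ by (2.2b) and $\Delta(1)=L(1)$, the module $\Delta(2)\otimes E$ has exactly the two composition factors $L(3)$ and $L(1)$. Now $L(3)=\Delta(3)=\nabla(3)$ is simultaneously a Weyl module and a dual Weyl module, so $\mathrm{Ext}^1_H(L(3),L(1))=0$ and $\mathrm{Ext}^1_H(L(1),L(3))=0$ (a Weyl module admits no nontrivial extension by a simple module of smaller highest weight, and dually for dual Weyl modules). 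Hence $\Delta(2)\otimes E$ is semisimple, $\Delta(2)\otimes E\cong L(3)\oplus L(1)=T(3)\oplus E$. On the other hand the restriction of \eqref{tensortilt} to $H$ gives $E^{\otimes 3}\cong T(3)\oplus E\oplus E$, so $\Delta(2)\otimes E$ is isomorphic to a direct summand of $E^{\otimes 3}$, which completes the proof.

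The only real difficulty is the splitting of the sequence $0\to\Delta(2)\otimes E\to E^{\otimes 3}\to E\to 0$ in the base case. In characteristic $\neq 2$ one would split such a sequence using the $\SL(2,K)$-invariant element of $\Lambda^2E\subseteq E^{\otimes 2}$, but in characteristic $2$ that element lies inside the submodule $\Delta(2)$ and so is useless here; this is why a cohomological argument is needed. Equivalently, one can verify the splitting by showing $\mathrm{Ext}^1_H(E,\Delta(2)\otimes E)\cong\mathrm{Ext}^1_H(E^{\otimes 2},\Delta(2))=\mathrm{Ext}^1_H(T(2),\Delta(2))=0$, which follows from the long exact sequence associated to $0\to\Delta(2)\to T(2)\to\Delta(0)\to 0$ once one knows $\dim\mathrm{Ext}^1_H(\Delta(0),\Delta(2))=1$.
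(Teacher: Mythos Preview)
Your proof is correct and follows essentially the same approach as the paper: reduce to the base case $\Delta(2)\otimes E\mid E^{\otimes 3}$ and propagate by tensoring, and establish the base case by tensoring the short exact sequence $0\to\Delta(2)\to T(2)\to\Delta(0)\to 0$ with $E$ and showing it splits. The only minor variation is that the paper deduces the splitting from the semisimplicity of the middle term $E^{\otimes 3}\cong T(3)\oplus 2T(1)$ (via (2.2b) and (2.2d)), whereas you deduce it from the semisimplicity of the submodule $\Delta(2)\otimes E$ via Ext vanishing; both routes give $\Delta_{1,1}\cong T(3)\oplus T(1)$.
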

\begin{proof}
We first consider the case where $s=t=1$.  By \eqref{tensortilt} with $r=p=2$ we obtain
\begin{equation}
\label{esquare}
E^{\otimes 2} \cong T(2).
\end{equation}
Thus, by (2.2e), there is short exact sequence
$$0\to \Delta(2) \to E^{\otimes 2} \to \Delta(0)\to 0$$
of $KH$-modules. Tensoring this with $E \cong \Delta(1)$
gives
\begin{equation}
\label{ecubeseq}
0\to  \Delta(2)\otimes \Delta(1) \to E^{\otimes 3} \to \Delta(1)\to 0.
\end{equation}
Now, by \eqref{tensortilt} with $r=3$ and $p=2$, we have
\begin{equation}
\label{ecube}
E^{\otimes 3} \cong T(3) \oplus 2T(1),
\end{equation}
so that,  by (2.2b) and (2.2d),  the middle term of \eqref{ecubeseq} is semisimple. Thus the sequence \eqref{ecubeseq} is split and we deduce that
\begin{equation}
\label{delta11}
\Delta_{1,1} =\Delta(2) \otimes \Delta(1) \cong T(3) \oplus T(1).
\end{equation}
In particular,  $\Delta_{1,1}$ is isomorphic to a direct summand of $E^{\otimes 3}$ and hence $\Delta_{1,t}=\Delta(2)\otimes E^{\otimes t}=\Delta_{1,1} \otimes E^{\otimes t-1}$ is isomorphic to a direct summand of $E^{\otimes t+2}$ for all $t \geq 1$. It follows that $\Delta(2) \otimes T \mid E^{\otimes t+2}$ for all $T \mid E^{\otimes t}$ and hence by induction that $\Delta_{s,t} \mid E^{\otimes 2s+t}$ for all $s,t \geq 1$.\end{proof}

\begin{corollary}
\label{Dtilt}
Let $K$ be an infinite field of characteristic $2$, let $G=\GL(2,K)$ and let $E$ denote the
natural $KG$-module. For each $s,t \geq1$ let $D_{s,t}$ denote the $KG$-module defined by $D_{s,t}= \Delta(2,0)^{\otimes s} \otimes \Delta(2,1)^{\otimes t}$. Then $D_{s,t} \mid E^{\otimes 2s+3t}$.
\end{corollary}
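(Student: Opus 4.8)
The plan is to deduce the $\GL(2,K)$ statement directly from Lemma \ref{deltatilt}, which already gives the corresponding fact over $\SL(2,K)$, by passing through the dictionary between modules for the Schur algebra $S(2,r)$ and their restrictions to $\SL(2,K)$ that was set up in Section \ref{n=2}. First I would observe that $D_{s,t}$ is a polynomial $KG$-module of degree $2s+3t$: indeed $\Delta(2,0)$ and $\Delta(2,1)$ are polynomial of degrees $2$ and $3$ respectively, and tensor products of polynomial modules add degrees. Hence $D_{s,t}$ decomposes as a direct sum of indecomposable summands, and since $E^{\otimes 2s+3t}$ is a tilting module whose indecomposable summands are exactly the $T(\lambda)$ with $\lambda \in \Lambda^+_p(2s+3t)$, it suffices to show that every indecomposable summand of $D_{s,t}$ is (isomorphic to) one of these $T(\lambda)$; that is, that $D_{s,t}$ is a tilting module for $G$.

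The key step is then: $\Delta_{s,t}$, the restriction of $D_{s,t}$ to $H = \SL(2,K)$, is a tilting module for $H$ by Lemma \ref{deltatilt} (being a summand of $E^{\otimes 2s+3t}|_H$, which is tilting by (2.2g) and the closure of tilting modules under tensor products). So $\Delta_{s,t} \cong \bigoplus_j T(m_j)$ for suitable $m_j \in W_{2s+3t}$. Now I would invoke the one-to-one correspondence recalled in Section \ref{n=2} (from~\cite[3.2.7]{D-LN}): for each $m$ with $m \le r$ and $m \equiv r \bmod 2$ there is a unique partition $\lambda(m) \in \Lambda^+(r)$ whose associated tilting module $T(\lambda(m))$ restricts to $T(m)$. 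Applying this with $r = 2s+3t$, the polynomial $KG$-module $P := \bigoplus_j T(\lambda(m_j))$ restricts to $\bigoplus_j T(m_j) \cong D_{s,t}|_H$. Since a polynomial $KG$-module of degree $r$ is determined up to isomorphism by its restriction to $\SL(2,K)$ together with its degree (again the content of the correspondence in Section \ref{n=2}, applied to the summands), we conclude $D_{s,t} \cong P$, so $D_{s,t}$ is a tilting module for $G$. Combined with $D_{s,t}|_H \mid E^{\otimes 2s+3t}|_H$ and the fact that the tilting summand $T(\lambda(m_j))$ of $E^{\otimes 2s+3t}$ corresponding to $T(m_j)$ is uniquely determined, this forces $D_{s,t} \mid E^{\otimes 2s+3t}$ as $KG$-modules.

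The only real subtlety — the "main obstacle" — is making the last deduction precise: knowing that $D_{s,t}$ and some summand of $E^{\otimes 2s+3t}$ have isomorphic restrictions to $\SL(2,K)$ does not a priori make them isomorphic as $\GL(2,K)$-modules, since $\GL(2,K)$-modules can differ by a twist of the determinant. This is exactly why the degree bookkeeping matters: both $D_{s,t}$ and the candidate summand of $E^{\otimes 2s+3t}$ are polynomial of the same degree $2s+3t$, and within a fixed degree the restriction to $\SL(2,K)$ pins down the $S(2,r)$-module uniquely. I would therefore state this carefully, citing the paragraph in Section \ref{n=2} that records the bijection $m \leftrightarrow \lambda(m)$ and the resulting identification of tilting modules, rather than leaving it implicit. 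Everything else is formal.
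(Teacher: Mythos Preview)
There is a genuine gap, and it is exactly the point you flag as the ``main obstacle'' but then resolve incorrectly. Lemma~\ref{deltatilt} says $\Delta_{s,t} \mid E^{\otimes 2s+t}$, \emph{not} $\Delta_{s,t} \mid E^{\otimes 2s+3t}$. Your parenthetical in the second paragraph and your final clause ``Combined with $D_{s,t}|_H \mid E^{\otimes 2s+3t}|_H$'' both assume the wrong exponent. The discrepancy matters: the $\SL(2,K)$-result lives in degree $2s+t$, while $D_{s,t}$ is a $\GL(2,K)$-module of degree $2s+3t$, so the dictionary of Section~\ref{n=2} does not apply directly. (Relatedly, your first-paragraph reduction ``it suffices to show $D_{s,t}$ is tilting'' is false as stated: tilting alone does not control multiplicities, so $D_{s,t} \cong \bigoplus a_\lambda T(\lambda)$ need not divide $E^{\otimes 2s+3t} \cong \bigoplus d_\lambda T(\lambda)$ unless you know $a_\lambda \le d_\lambda$.)

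What is missing is precisely the multiplicity comparison that bridges degrees $2s+t$ and $2s+3t$. In the paper's proof this is the inequality $g(m,a) \le g(m,a+2)$ for the multiplicity of $T(m)$ in $E^{\otimes a}$, quoted from~\cite{E2}; applying it $t$ times gives $a_m \le g(m,2s+t) \le g(m,2s+3t)$, which is exactly what is needed before the unique-lifting argument can be invoked. Note that one cannot simply say $E^{\otimes 2s+t} \mid E^{\otimes 2s+3t}$ over $\SL(2,K)$, since $T(0)$ is not a summand of $E^{\otimes 2} \cong T(2)$. Once you insert this step, the rest of your argument (lifting the tilting decomposition from $H$ to $G$ via the $m \leftrightarrow \lambda(m)$ bijection in fixed degree) is correct and is the same as the paper's.
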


\begin{proof}
Let $H=\SL(2,K)$ and for all positive integers $m$ and $a$ let $g(m,a)$ denote the multiplicity of $T(m)$ in $E^{\otimes a}$ considered as a $KH$-module. By Lemma \ref{deltatilt} we have that the restriction $\Delta_{s,t}$ of $D_{s,t}$ to $H$ is a summand of $E^{\otimes 2s+t}$. Thus we may write
$$\Delta_{s,t} = \bigoplus_{m\in A_{2s+t}} a_mT(m)$$
for some multiplicities $a_m$ satisfying $0 \leq a_m \leq g(m, 2s+t)$. Note that the sum ranges over all $m \in A_{2s+t}$ with $A_r$ as defined in \eqref{A}. Since $A_{2s+t}$ is a subset of $A_{2s+3t}$, each indecomposable module $T(m)$ occurring on the right-hand side is a $KH$-summand of $E^{\otimes 2s+3t}$. We shall show that each such summand $T(m)$ occurs in $E^{\otimes 2s+3t}$ with multiplicity greater than or equal to $a_m$. Then by the unique lifting of $T(m)$ to a tilting module for $G$ (see Section \ref{n=2}) it will follow that $D_{s,t}$ is isomorphic to a $KG$-summand of $E^{\otimes 2s+3t}$.

Thus we must show that $a_m \leq g(m, 2s+3t)$ for all $m \in A_{2s+t}$. Since $a_m \leq g(m, 2s+t)$, it is enough to show that $g(m, 2s+t) \leq g(m, 2s+3t)$. Now, by~\cite[Lemma 1.7.2 and Lemma 1.5(1)]{E2}, it is known that for any positive integers $a$ and $m$ we have $g(m,a) \leq g(m, a+2)$. This completes the proof.
\end{proof}

We shall now show that every indecomposable summand of $E^{\otimes 2s+t}$ is isomorphic to a direct summand of $\Delta_{s,t}$ as $\SL(2,K)$-modules.

\begin{lemma}
\label{Deltaparts}
Let $K$ be an infinite field of characteristic $2$, let $H=\SL(2,K)$ and let $E$ denote the
natural $KH$-module. Let $s,t\geq 1$. Then $T(m) \mid \Delta_{s,t}$ if and only if $ T(m) \mid E^{\otimes 2s+t}$.
\end{lemma}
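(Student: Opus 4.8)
The ``only if'' direction is immediate from Lemma~\ref{deltatilt}, which gives $\Delta_{s,t}\mid E^{\otimes 2s+t}$. For the converse, I would first recall from the restriction of~\eqref{tensortilt} to $H$ that the indecomposable summands of $E^{\otimes 2s+t}$ are precisely the $T(m)$ with $m\in A_{2s+t}$ (notation as in~\eqref{A}); so it suffices to prove $T(m)\mid\Delta_{s,t}$ whenever $0<m\le 2s+t$ and $m\equiv t\pmod 2$. The plan is to deduce this from an explicit decomposition of $\Delta_{s,t}$ together with a short weight-space argument.

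The crucial step is the decomposition, which I would establish as follows. Using $\Delta(1)=E$ and writing $\Delta_{0,u}:=E^{\otimes u}$, one has $\Delta_{s,t}=\Delta(2)^{\otimes s}\otimes E^{\otimes t}=\Delta_{1,1}\otimes\Delta_{s-1,t-1}$ for $s,t\ge1$. By~\eqref{delta11}, (2.2b) and (2.2d) there is an isomorphism of $KH$-modules $\Delta_{1,1}\cong T(3)\oplus T(1)\cong(E^F\otimes E)\oplus E$. Substituting this and simplifying gives
$$\Delta_{s,t}\ \cong\ \Delta_{s-1,t}\ \oplus\ \bigl(E^F\otimes\Delta_{s-1,t}\bigr)\qquad(s,t\ge1),$$
and iterating in $s$ yields $\Delta_{s,t}\cong\bigoplus_{a=0}^{s}\binom{s}{a}\,(E^{\otimes a})^F\otimes E^{\otimes t}$. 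In particular $(E^{\otimes a})^F\otimes E^{\otimes t}\mid\Delta_{s,t}$ for every $0\le a\le s$, and since $\Delta_{s,t}$ is a tilting module (being a summand of $E^{\otimes 2s+t}$ by Lemma~\ref{deltatilt}), all of these summands are tilting modules.

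Given $m$ with $0<m\le 2s+t$ and $m\equiv t\pmod 2$, I would argue as follows. If $m\le t$ then $m\in A_t$, so $T(m)\mid E^{\otimes t}\mid\Delta_{s,t}$. If $m>t$, write $m=t+2a$ with $1\le a\le s$. Since $a\in A_a$ and $t\in A_t$, we have $T(a)\mid E^{\otimes a}$ and $T(t)\mid E^{\otimes t}$, hence $T(a)^F\otimes T(t)$ is a direct summand of $(E^{\otimes a})^F\otimes E^{\otimes t}$, and therefore of $\Delta_{s,t}$; in particular it is a tilting module. Its highest weight is $2a+t=m$, and the weight-$m$ space is one-dimensional, being the tensor product of the one-dimensional highest-weight spaces of $T(a)^F$ and of $T(t)$ (cf.\ Remark~\ref{weight}). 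Since no indecomposable tilting module $T(k)$ with $k<m$ has a weight equal to $m$, comparing dimensions of the weight-$m$ space in a decomposition of $T(a)^F\otimes T(t)$ into indecomposable tilting modules forces $T(m)$ to occur as a summand. Hence $T(m)\mid\Delta_{s,t}$, which completes the argument.

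The real content is the decomposition in the second paragraph; after that everything is bookkeeping with weights. The points I expect to need care are the identification $\Delta_{1,1}\cong E\oplus(E^F\otimes E)$ as $KH$-modules (which rests on the already-established~\eqref{delta11} and on $T(3)\cong T(1)^F\otimes T(1)$ from (2.2d), using $T(1)=E$), and the remark that each $(E^{\otimes a})^F\otimes E^{\otimes t}$ is a tilting module because it is a direct summand of the tilting module $\Delta_{s,t}$ --- this is what makes it legitimate to split $T(a)^F\otimes T(t)$ into indecomposable tilting modules and run the weight argument.
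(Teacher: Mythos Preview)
Your proof is correct and takes a genuinely different route from the paper's. The paper argues by a double induction on $s$ and $t$: starting from $\Delta_{1,1}\cong T(3)\oplus T(1)$, it shows that tensoring with $T(1)$ (to increase $t$) or with $\Delta(2)$ (to increase $s$) propagates all the required $T(m)$'s, using (2.2g) for the $t$-step and a Weyl-filtration/highest-weight argument for the $s$-step, with separate parity case checks to pick up the smallest missing tilting module ($T(1)$ or $T(2)$) at each stage. Your approach is more structural: the identification $\Delta_{1,1}\cong E\oplus(E^F\otimes E)$ (via \eqref{delta11} and (2.2d) with $p=2$) yields the closed-form decomposition $\Delta_{s,t}\cong\bigoplus_{a=0}^{s}\binom{s}{a}\,(E^{\otimes a})^F\otimes E^{\otimes t}$, after which a single highest-weight count in the tilting summand $T(a)^F\otimes T(t)$ finishes the job. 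Your method actually gives more --- an explicit decomposition with multiplicities --- and avoids the parity case analysis; the paper's induction, on the other hand, uses only the generic tilting facts (2.2f), (2.2g) and does not rely on the Frobenius-twist description of $T(3)$ peculiar to characteristic~$2$.
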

\begin{proof}
Let $r=2s+t$ and $p=2$. As we have seen, the restriction of \eqref{tensortilt} to $H$ yields that $ T(m) \mid E^{\otimes 2s+t}$ if and only if $m \in A_{2s+t}$, where $A_{2s+t}$ is as in \eqref{A}. Thus we must show that $T(m) \mid \Delta_{s,t}$ if and only if $m \in A_{2s+t}$. By Lemma \ref{deltatilt} we may write
\begin{equation}
\label{star}
\Delta_{s,t} \cong \bigoplus_{m \in A_{2s+t}} a_m T(m)
\end{equation}
for some multiplicities $a_m \geq 0$. So it is enough to show that each of the multiplicities $a_m$ occurring on the right-hand side of \eqref{star} is non-zero.

We first note that this holds for $\Delta_{1,1}$. Indeed $A_{2(1)+1} = \{1,3\}$ and by \eqref{delta11} $\Delta_{1,1} \cong T(3) \oplus T(1)$.  Thus we may suppose that the multiplicities $a_m$ occurring on the right-hand side of \eqref{star} are all positive for some $s,t \geq1$ and proceed by induction on $s$ and $t$. Since $\Delta(1) = T(1)$, from \eqref{star} we obtain
$$\Delta_{s,t+1}=\Delta_{s,t} \otimes T(1) \cong \bigoplus_{m \in A_{2s+t}} a_m T(m)\otimes T(1).$$
Now by (2.2g) we find that $T(m+1) \mid \Delta_{s,t+1}$ for all $m \in A_{2s+t}$. We note that when $t$ is odd $A_{2s+(t+1)} = \{ m+1: m \in A_{2s+t}\}$, whilst when $t$ is even, $A_{2s+(t+1)} = \{m+1: m \in A_{2s+t}\} \cup \{1\}$. Thus in order to show that the result holds for $\Delta_{s,t+1}$ it remains to show that $T(1) \mid \Delta_{s,t+1}$ whenever $t$ is even. When $t$ is even we have by induction that $T(2) \mid \Delta_{s,t}$ and thus $T(2)\otimes T(1)\mid \Delta_{s,t+1}$. By \eqref{esquare} and the fact that $E \cong T(1)$ we see that $T(2) \otimes T(1) \cong E^{\otimes 3}$. Thus \eqref{ecube} yields that $T(1) \mid \Delta_{s,t+1}$.

From \eqref{star} we also obtain
$$ \Delta_{s+1,t} = \Delta(2) \otimes \Delta_{s,t} \cong \bigoplus_{m \in A_{2s+t}} a_m \Delta(2)\otimes T(m).$$
Since $T(m)$ has Weyl filtration with sections $\Delta(m), \Delta(m_1), \cdots, \Delta(m_k)$ where $m > m_1, \ldots, m_k$ we deduce by (2.2f) that $\Delta(2)\otimes T(m)$ has Weyl filtration with sections $\Delta(m+2), \Delta(n_1), \cdots, \Delta(n_l)$ where $m+2 > n_1, \ldots, n_l$. By Lemma \ref{deltatilt} we have $\Delta_{s+1,t}\mid E^{\otimes 2(s+1)+t}$ and it follows that each of the summands $\Delta(2)\otimes T(m)$ occurring on the right-hand side above must decompose as a direct sum of indecomposable tilting modules. By consideration of highest weights we deduce that $T(m+2) \mid \Delta(2)\otimes T(m)$ and hence $T(m+2) \mid \Delta_{s+1,t}$ for all $m \in A_{2s+t}$. We note that when $t$ is odd $A_{2(s+1)+t} = \{k+2: k \in A_{2s+t}\} \cup \{1\}$, whilst when $t$ is even $A_{2(s+1)+t} = \{ k+2 : k \in A_{2s+t}\} \cup \{2\}$. Thus in order to show that the result holds for $\Delta_{s+1,t}$ it remains to show that $T(1) \mid \Delta_{s+1,t}$ whenever $t$ is odd and $T(2) \mid \Delta_{s+1,t}$ whenever $t$ is even. When $t$ is odd we have by induction that $T(1) \mid \Delta_{s,t}$ giving $\Delta(2) \otimes T(1)\mid \Delta_{s+1,t}$. Hence by \eqref{delta11} we see that $T(1) \mid \Delta_{s,t+1}$. When $t$ is even we have by induction that $T(2) \mid \Delta_{s,t}$ giving $\Delta(2) \otimes T(2)\mid \Delta_{s+1,t}$. By (2.2e) and (2.2f) we find that $\Delta(2) \otimes T(2)$ has Weyl filtration with quotients $\Delta(4), \Delta(2), \Delta(2), \Delta(0)$. Since $\Delta(2)\otimes T(2)$ must decompose as a direct sum of indecomposable tilting modules, consideration of highest weights yields $\Delta(2)\otimes T(2) \cong T(4) \oplus T(2)$ and hence $T(2) \mid \Delta_{s+1,t}$.
\end{proof}

\begin{corollary}
\label{Dparts}
Let $K$ be an infinite field of characteristic $2$, let $G=\GL(2,K)$ and let $E$ denote the
natural $KG$-module. Let $s,t\geq 1$. Then $T(\lambda) \mid D_{s,t}$ if and only if $\lambda=(\lambda_1, \lambda_2)$ with $\lambda_1>\lambda_2 \geq t$.
\end{corollary}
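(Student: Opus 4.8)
The plan is to deduce this quickly from Corollary~\ref{Dtilt}, Lemma~\ref{Deltaparts}, and the dictionary between tilting modules for $\GL(2,K)$ and those for $\SL(2,K)$ recorded in Section~\ref{n=2}. First, by Corollary~\ref{Dtilt} we have $D_{s,t} \mid E^{\otimes 2s+3t}$; since $E^{\otimes 2s+3t}$ is a tilting module of degree $2s+3t$, the Krull--Schmidt theorem forces $D_{s,t}$ to be a tilting module as well, say
$$ D_{s,t} \cong \bigoplus_{\lambda} c_\lambda\, T(\lambda), $$
where $\lambda$ runs over the partitions of $2s+3t$ into at most two parts and $c_\lambda \geq 0$. (Note $\Delta(2,0)$ is itself not tilting, so one really needs Corollary~\ref{Dtilt} here rather than closure of tilting modules under tensor products.)

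Next I restrict to $H=\SL(2,K)$. Recall that $T(\lambda)$ restricts to $T(\lambda_1-\lambda_2)$, and that for the fixed degree $r=2s+3t$ the assignment $\lambda \mapsto m:=\lambda_1-\lambda_2$ is a bijection from $\Lambda^+(2,r)$ onto $\{m : 0\leq m\leq r,\ m\equiv r \bmod 2\}$, the inverse sending $m$ to $((r+m)/2,(r-m)/2)$. Hence the restriction of the displayed decomposition reads $\Delta_{s,t} \cong \bigoplus_\lambda c_\lambda\, T(\lambda_1-\lambda_2)$, a decomposition into pairwise non-isomorphic indecomposable tilting modules, so the multiplicity of $T(m)$ in $\Delta_{s,t}$ is exactly $c_{\lambda(m)}$. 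In particular $T(\lambda) \mid D_{s,t}$ if and only if $T(\lambda_1-\lambda_2) \mid \Delta_{s,t}$.

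Finally, by Lemma~\ref{Deltaparts}, $T(m) \mid \Delta_{s,t}$ if and only if $m \in A_{2s+t}$, i.e.\ $0<m\leq 2s+t$ and $m\equiv 2s+t\equiv t \bmod 2$. Writing $m=\lambda_1-\lambda_2$ with $\lambda_1+\lambda_2=2s+3t$: the congruence $m\equiv t \bmod 2$ holds automatically, the condition $m>0$ becomes $\lambda_1>\lambda_2$, and $m\leq 2s+t$ rearranges to $2\lambda_2\geq 2t$, that is $\lambda_2\geq t$. This is precisely the claimed characterisation, and it also confirms en route that every such $\lambda$ is $2$-regular.

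There is no essential difficulty; the only points demanding care are the bookkeeping of the two parity conventions (that governing partitions of $2s+3t$ versus that defining $A_{2s+t}$) and the observation that no information is lost on passing to $\SL(2,K)$, which is guaranteed by the injectivity of $\lambda\mapsto\lambda_1-\lambda_2$ in a fixed degree.
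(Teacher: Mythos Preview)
Your proof is correct and follows essentially the same route as the paper's: use Corollary~\ref{Dtilt} to see that $D_{s,t}$ is a tilting module, restrict to $\SL(2,K)$ and invoke Lemma~\ref{Deltaparts}, then translate the condition $\lambda_1-\lambda_2 \in A_{2s+t}$ back into the stated condition on $\lambda$ using $\lambda_1+\lambda_2=2s+3t$. Your added remarks on why no information is lost upon restriction (injectivity of $\lambda\mapsto\lambda_1-\lambda_2$ in fixed degree) make explicit what the paper leaves implicit.
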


\begin{proof}
By Corollary \ref{Dtilt} we see that $D_{s,t} \mid E^{\otimes 2s +3t}$. Thus $D_{s,t}$ decomposes as a direct sum of
indecomposable tilting modules of the form $T(\lambda)$ where $\lambda=(\lambda_1, \lambda_2)$ is a partition of
$2s+3t$ into at most two parts with $\lambda_1 > \lambda_2$. Let $\lambda$ be such a partition. The restriction of $D_{s,t}$ to $\SL(2,K)$ is denoted by $\Delta_{s,t}$ and thus $T(\lambda)$ is a direct summand of $D_{s,t}$ if and only if $T(\lambda_1-\lambda_2)$ is a direct summand of $\Delta_{s,t}$. By Lemma \ref{Deltaparts}, this happens if and only if $\lambda_1-\lambda_2 \in A_{2s+t}$. That is, if and only if $0 < \lambda_1-\lambda_2 \leq 2s+t$ and $\lambda_1-\lambda_2 \equiv 2s+t \bmod 2$. Since $\lambda$ is a partition of $2s+3t$, we also have $\lambda_1 + \lambda_2 = 2s+3t$. Hence we deduce that $T(\lambda) \mid D_{s,t}$ if and only if $\lambda=(\lambda_1, \lambda_2)$ with $\lambda_1>\lambda_2 \geq t$.
\end{proof}

\begin{proposition}
\label{mostparts}
Let $K$ be an infinite field of characteristic $2$, let $G=\GL(2,K)$ and let $E$ denote the
natural $KG$-module. Let $r$ be a positive integer greater than $6$ and let $\lambda \in \Lambda_2^+(r)$.
\begin{itemize}
\item[(i)] If $r$ is odd and $\lambda \neq (r)$ then $T(\lambda) \mid L^r(E)$.
\item[(ii)] If $r$ is even and $\lambda \neq (r), (r-1,1)$ then $T(\lambda) \mid L^r(E)$.
\end{itemize}
\end{proposition}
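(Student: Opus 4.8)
The plan is to deduce Proposition~\ref{mostparts} from Corollary~\ref{Dparts} together with the observation (made immediately after \eqref{Ralpheq}) that $D_{s,t}$ is isomorphic to a direct summand of $L^{2s+3t}(E)$ for all $s,t\geq 1$. Fix $\lambda=(\lambda_1,\lambda_2)\in\Lambda_2^+(r)$, so that $\lambda_1>\lambda_2$ and $\lambda_1+\lambda_2=r$. I claim it suffices to produce integers $s,t\geq 1$ with $2s+3t=r$ and $t\leq\lambda_2$. Indeed, $2s+3t=r$ gives $D_{s,t}\mid L^r(E)$, while the inequalities $\lambda_1>\lambda_2\geq t$ give $T(\lambda)\mid D_{s,t}$ by Corollary~\ref{Dparts}; combining these yields $T(\lambda)\mid L^r(E)$. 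So the whole proposition reduces to the existence of such a pair $(s,t)$ under the stated hypotheses on $r$ and $\lambda$.

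This is a short parity computation. For (i), assume $r$ is odd and $\lambda\neq(r)$, so $\lambda_2\geq 1$. Since $r$ is odd, the equation $2s+3t=r$ forces $t$ to be odd, and the choice $t=1$, $s=(r-3)/2$ then satisfies $t=1\leq\lambda_2$ and $s\geq 1$ (the latter because $r\geq 7$, which is where $r>6$ enters). For (ii), assume $r$ is even and $\lambda\neq(r),(r-1,1)$, so $\lambda_2\geq 2$. Now $2s+3t=r$ forces $t$ to be even, and the choice $t=2$, $s=(r-6)/2$ satisfies $t=2\leq\lambda_2$ and $s\geq 1$ (again using $r\geq 8$). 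In each case the required pair exists, and the proposition follows.

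There is no genuine obstacle remaining here: all the real work has already been absorbed into Corollary~\ref{Dparts} (which in turn rests on St\"ohr's decomposition \eqref{Ralpheq} and Lemmas~\ref{deltatilt}--\ref{Deltaparts}). The only points needing care are the parity of $t$ dictated by $2s+3t=r$ and the consequent need for $r>6$ to keep $s\geq 1$ in the even case, since for $r=6$ the forced value $t=2$ would give $s=0$, a case not covered by Corollary~\ref{Dparts}. I also note that only the $L^1$-terms of \eqref{Ralpheq}, namely the summands $D_{s,t}$ with $2s+3t=r$ exactly, are used; the remaining terms $L^{r/(2s+3t)}(D_{s,t})$ with $2s+3t$ a proper divisor of $r$ could contribute further tilting summands of $L^r(E)$, but their structure is harder to control and is not needed for this proposition. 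In particular this argument says nothing about the excluded weights $(r)$ and (when $r$ is even) $(r-1,1)$; the latter is precisely what Theorem~B is for.
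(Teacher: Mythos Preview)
Your proof is correct and is essentially the same as the paper's: you pick exactly the same pairs $(s,t)=((r-3)/2,1)$ when $r$ is odd and $(s,t)=((r-6)/2,2)$ when $r$ is even, then invoke Corollary~\ref{Dparts} and the fact that $D_{s,t}\mid L^{2s+3t}(E)$ from \eqref{Ralpheq}. The only difference is presentational---you set up the reduction to finding a suitable $(s,t)$ first, whereas the paper writes down the specific summand $D_{k-1,1}$ or $D_{k-3,2}$ directly.
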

\begin{proof}
(i) Write $r=2k+1$ where $k \geq 3$. By equation \eqref{Ralpheq}, $D_{k-1,1} \mid L^r(E)$. Thus it is enough to show that $T(\lambda) \mid D_{k-1,1}$ for all $\lambda \neq (r)$. This follows immediately from Corollary \ref{Dparts}.

(ii) Let $r=2k$ where $k>3$. By equation \eqref{Ralpheq}, $D_{k-3,2} \mid L^r(E)$. Thus it is enough to show that $T(\lambda) \mid D_{k-3,2}$ for all $\lambda \neq (r), (r-1,1)$. This follows from Corollary \ref{Dparts}.
\end{proof}

The direct sum decomposition given in \eqref{Ralpheq} also allows us to find lower bounds for the multiplicities of the indecomposable tilting modules occurring up to isomorphism as direct summands of $L^r(E)$. In fact we shall see that these multiplicities are large in general. Restricting \eqref{Ralpheq} to $\SL(2,K)$ yields that $\Delta_{s,t}$ is a summand of $L^r(E)$ whenever $r=2s+3t$ and $s, t\geq 1$.

\begin{lemma}\label{bound}
Let $K$ be an infinite field of characteristic $2$, let $H=\SL(2,K)$ and let $E$ denote the
natural $KH$-module. Let $r=2s+3t$ with $s, t\geq 1$. Then $E^{\otimes t} \mid \Delta_{s,t}$.
\end{lemma}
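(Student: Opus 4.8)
The plan is to induct on $s$, absorbing the extra tensor factor $\Delta(2)$ at each step by means of the decomposition \eqref{delta11}, and so to reduce the whole statement to the base case $s=1$. Throughout I use that $\Delta(1)=E$ by (2.2b), so that $\Delta_{s,t}=\Delta(2)^{\otimes s}\otimes E^{\otimes t}$; in particular $\Delta_{1,t}=\Delta_{1,1}\otimes E^{\otimes t-1}$ and $\Delta_{s+1,t}=\Delta(2)\otimes\Delta_{s,t}$ for all $s,t\geq1$. Note that the hypothesis $r=2s+3t$ plays no role in the argument; it is included only because this is the form in which the lemma is applied to \eqref{Ralpheq}.

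First I would settle the base case $s=1$. By \eqref{delta11} we have $\Delta_{1,1}=\Delta(2)\otimes E\cong T(3)\oplus T(1)$, and $T(1)\cong E$ by (2.2b). Tensoring with $E^{\otimes t-1}$ then gives
$$\Delta_{1,t}\cong\bigl(T(3)\otimes E^{\otimes t-1}\bigr)\oplus E^{\otimes t},$$
so that $E^{\otimes t}\mid\Delta_{1,t}$ for every $t\geq1$.

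For the inductive step, assume $E^{\otimes t}\mid\Delta_{s,t}$ for all $t\geq1$ and fix $t\geq1$. Write $\Delta_{s,t}\cong E^{\otimes t}\oplus M$ for a suitable $KH$-module $M$; tensoring with $\Delta(2)$ yields
$$\Delta_{s+1,t}=\Delta(2)\otimes\Delta_{s,t}\cong\bigl(\Delta(2)\otimes E^{\otimes t}\bigr)\oplus\bigl(\Delta(2)\otimes M\bigr)=\Delta_{1,t}\oplus\bigl(\Delta(2)\otimes M\bigr).$$
Since $E^{\otimes t}\mid\Delta_{1,t}$ by the base case, we conclude $E^{\otimes t}\mid\Delta_{s+1,t}$, and the induction on $s$ is complete.

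There is essentially no obstacle here: the entire content is the single observation \eqref{delta11} that $E$ splits off $\Delta(2)\otimes E$, together with the trivial fact that tensoring a direct-sum decomposition by a fixed module preserves direct summands. If one wished to avoid quoting \eqref{delta11}, one could instead note that $\Delta(2)\otimes E$ is a tilting module (being a summand of $E^{\otimes3}$ by Lemma \ref{deltatilt}) with a Weyl filtration having sections $\Delta(3),\Delta(1)$ by (2.2f), so that consideration of highest weights forces $\Delta(2)\otimes E\cong T(3)\oplus T(1)$; but the direct appeal to \eqref{delta11} is cleaner.
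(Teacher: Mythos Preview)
Your proof is correct. Both your argument and the paper's rest on the single observation \eqref{delta11} that $E\cong T(1)$ splits off $\Delta(2)\otimes E$, followed by tensoring; the difference is purely in how the bookkeeping is organised. The paper fixes $s$ and invokes Lemma~\ref{Deltaparts} to obtain $E\mid\Delta_{s,1}$ directly (for any $s\geq1$), then tensors with $E^{\otimes t-1}$ to reach $\Delta_{s,t}$. You instead fix $t$, establish $E^{\otimes t}\mid\Delta_{1,t}$ from \eqref{delta11}, and induct on $s$. Your route is marginally more self-contained, since it avoids appealing to the stronger Lemma~\ref{Deltaparts} and uses only \eqref{delta11}; the paper's route is terser because the induction on $s$ is already packaged inside Lemma~\ref{Deltaparts}. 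Neither approach offers a real advantage over the other.
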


\begin{proof} By Lemma \ref{Deltaparts} and the fact that $E \cong T(1)$ we know that $E \mid \Delta_{s,1}$ as $\SL(2,K)$-modules. Hence $E^{\otimes t} \mid \Delta_{s,1}\otimes E^{\otimes t-1} = \Delta_{s,t}$.
\end{proof}

\begin{corollary}\label{bound-cor}
Let $K$ be an infinite field of characteristic $2$, $G=\GL(2,K)$ and let $E$ denote the
natural $KG$-module. Let $r$ be a positive integer and let $\{(s_i,t_i) :i=1, \ldots, k\}$ be a complete set of solutions to the equation $r=2s+3t$.
For each partition $\lambda=(\lambda_1, \lambda_2)$ of $r$ with $0<\lambda_1-\lambda_2 \leq t_i$ define $\lambda(i) = (\lambda_1 - (s_i+t_i), \lambda_2 - (s_i+t_i))$. Then the multiplicity of $T(\lambda)$ in $L^r(E)$ is bounded below by
$$\sum m_{s_i,t_i} d_{\lambda(i)},$$
where the sum ranges over all $i$ such that $0< \lambda_1-\lambda_2 \leq t_i$.
\end{corollary}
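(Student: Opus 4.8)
The plan is to extract a direct summand of $L^r(E)$ from the decomposition \eqref{Ralpheq} and then to bound from below the multiplicity of $T(\lambda)$ inside each of its constituents, using Lemma \ref{bound}. Only the solutions with $s_i, t_i \geq 1$ are relevant: a solution with $t_i = 0$ never satisfies $0 < \lambda_1 - \lambda_2 \leq t_i$, while for $r>3$ a solution with $s_i = 0$ has $m_{s_i,t_i}=0$, and the remaining small cases $r \leq 3$ are checked directly. So fix a solution with $s_i, t_i \geq 1$; since $2s_i + 3t_i = r$ we have $L^{r/(2s_i+3t_i)}(D_{s_i,t_i}) = D_{s_i,t_i}$, and as \eqref{Ralpheq} is a direct sum decomposition it follows that $\bigoplus_{i} m_{s_i,t_i} D_{s_i,t_i}$ is a direct summand of $L^r(E)$. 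Hence the multiplicity of $T(\lambda)$ in $L^r(E)$ is at least $\sum_i m_{s_i,t_i} c_i$, where $c_i$ denotes the multiplicity of $T(\lambda)$ as a summand of $D_{s_i,t_i}$.

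The core step is to prove $c_i \geq d_{\lambda(i)}$ whenever $0 < \lambda_1 - \lambda_2 \leq t_i$ (for other $i$ there is nothing to do, and we may assume $\lambda_1 > \lambda_2$, so $\lambda$ is $2$-regular). From $\lambda_1 + \lambda_2 = 2s_i + 3t_i$ and $\lambda_1 - \lambda_2 \leq t_i$ we get $\lambda_1 \leq s_i + 2t_i$, hence $\lambda_2 \geq s_i + t_i$, so $\lambda(i) = (\lambda_1 - (s_i+t_i),\, \lambda_2 - (s_i+t_i))$ is a genuine partition of $t_i$ into at most two parts; it is $2$-regular because its parts differ by $\lambda_1 - \lambda_2 > 0$, and this difference equals that of $\lambda$. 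By Corollary \ref{Dtilt}, $D_{s_i,t_i}$ is a tilting module for $G$ of degree $r$; since distinct partitions of $r$ into at most two parts have distinct part-differences, restriction to $H = \SL(2,K)$ identifies $c_i$ with the multiplicity of $T(\lambda_1 - \lambda_2)$ as a summand of the restriction $\Delta_{s_i,t_i}$ of $D_{s_i,t_i}$ to $H$. Now Lemma \ref{bound} gives $E^{\otimes t_i} \mid \Delta_{s_i,t_i}$ as $KH$-modules, and restricting \eqref{tensortilt} (with $r$ replaced by $t_i$) to $H$ shows that $T(\lambda_1 - \lambda_2)$ occurs as a summand of the restriction of $E^{\otimes t_i}$ to $H$ with multiplicity $d_{\lambda(i)}$, since it is the restriction of the summand $T(\lambda(i))$ of $E^{\otimes t_i}$ and $\lambda(i)$ is the only partition of $t_i$ with part-difference $\lambda_1 - \lambda_2$. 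Therefore $c_i \geq d_{\lambda(i)}$, and summing $m_{s_i,t_i} c_i$ over the relevant $i$ gives the claimed bound.

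The only genuine subtlety is the bookkeeping in the passage between $G$- and $H$-modules: one must use that each indecomposable tilting module $T(m)$ for $H$ lifts uniquely to a tilting module $T(\lambda(m))$ for $G$ of degree $r$ (Section \ref{n=2}), so that multiplicities of tilting summands are preserved on restricting to $H$ within a fixed polynomial degree; and one must verify that $\lambda(i)$ is genuinely a partition, which is exactly where the hypothesis $\lambda_1 - \lambda_2 \leq t_i$ enters. Everything else is assembled directly from \eqref{Ralpheq}, Corollary \ref{Dtilt} and Lemma \ref{bound}.
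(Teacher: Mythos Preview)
Your proof is correct and follows essentially the same approach as the paper: extract $\bigoplus_i m_{s_i,t_i} D_{s_i,t_i}$ from \eqref{Ralpheq}, pass to $\SL(2,K)$, apply Lemma \ref{bound} to get $E^{\otimes t_i}\mid \Delta_{s_i,t_i}$, and read off the multiplicity $d_{\lambda(i)}$ from \eqref{tensortilt}. You are more careful than the paper in two places---you explicitly invoke Corollary \ref{Dtilt} to justify that multiplicities of tilting summands are preserved under restriction to $H$, and you deal with the boundary solutions having $s_i=0$ or $t_i=0$---but this is extra bookkeeping rather than a different argument.
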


\begin{proof}
By \eqref{Ralpheq} we have $\bigoplus_i m_{s_i,t_i}D_{s_i,t_i} \mid L^r(E)$. Thus the multiplicity of $T(\lambda)$ in $L^r(E)$ is greater than or equal to the multiplicity of $T(\lambda)$ in $\bigoplus_i m_{s_i,t_i}D_{s_i,t_i}$. Restriction to $\SL(2,K)$ yields that the multiplicity of $T(\lambda)$ in $L^r(E)$ is greater than or equal to the multiplicity of $T(\lambda_1-\lambda_2)$ in $\bigoplus_i m_{s_i,t_i}\Delta_{s_i,t_i}$. By Lemma \ref{bound} we see that $E^{\otimes t_i} \mid \Delta_{s_i,t_i}$. Thus the multiplicity of $T(\lambda)$ in $L^r(E)$ is greater than or equal to the multiplicity of $T(\lambda_1-\lambda_2)$ in $\bigoplus_i m_{s_i,t_i}E^{\otimes t_i}$ and the result now follows from \eqref{tensortilt} restricted to $\SL(2,K)$.
\end{proof}

Notice that Proposition \ref{mostparts} and Lemma \ref{bound} go most of the way to proving Theorem A. Indeed, it only remains to show that $T(r-1,1) \mid L^r(E)$ if and only if $r$ is not a power of $2$. We shall prove this result in the following section.

In the remainder of this section we shall prove that, for $r>6$, $L^r(E)$ is a tilting module if and only if $r$ is odd.

\begin{lemma}
\label{Lienontilt}
\begin{itemize}
\item[(i)] $L^2(T(2))$ contains a non-tilting summand.
\item[(ii)] $L^2(T(3))$ contains a non-tilting summand.
\end{itemize}
\end{lemma}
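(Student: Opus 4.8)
The plan is to compute each of $L^2(T(2))$ and $L^2(T(3))$ explicitly as an $\SL(2,K)$-module (equivalently, as a module in $\mathcal{P}_r$ for the appropriate $r$) and to exhibit a composition factor configuration that cannot occur in a tilting module. Recall that for any $KH$-module $M$ one has $M^{\otimes 2}\cong S^2(M)\oplus L^2(M)$ in characteristic $2$ only if $2$ is invertible, which it is not here; instead there is a short exact sequence $0\to L^2(M)\to M^{\otimes 2}\to S^2(M)\to 0$, and dually $0\to \Gamma^2(M)\to M^{\otimes 2}\to L^2(M)\to 0$ where $\Gamma^2$ is the divided square. So the first step is to pin down $L^2(M)$ from the known structure of $M^{\otimes 2}$ and of $S^2(M)$ (or $\Gamma^2(M)$) for $M=T(2)$ and $M=T(3)$.

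For part (i): by \eqref{esquare} we have $E^{\otimes 2}\cong T(2)$, so $T(2)^{\otimes 2}\cong E^{\otimes 4}$, whose decomposition into indecomposable tilting modules is given by \eqref{tensortilt} with $r=4$, $p=2$ (i.e. $E^{\otimes 4}\cong T(4)\oplus 2T(2)\oplus\text{(copies of }T(0))$, after restricting to $H$ and using the dimensions $d_\lambda$). I would then identify $S^2(T(2))$: since $T(2)$ has a dual Weyl filtration with sections $\nabla(2),\nabla(0)$, the symmetric square $S^2(T(2))=S^2(\nabla(2))\,\oplus\!\!-\text{filtration}$ can be read off from $\dim=6$ and its weights, and subtracting from $T(2)^{\otimes 2}$ leaves $L^2(T(2))$ of dimension $\binom{3}{2}=3$ with a specific weight list. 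The key point is that $T(2)$ is uniserial with Loewy structure having $L(0)$ on top and bottom and $L(2)$ in the middle (this is (2.2e) read together with (2.2b)), and one checks that the resulting $3$-dimensional module $L^2(T(2))$ has a composition factor $L(0)$ in a position (a socle or head layer) forcing a non-split extension that no direct sum of $T(m)$'s can realise; concretely I expect $L^2(T(2))$ to be indecomposable with socle and head not matching any tilting module of that dimension, so it is itself the non-tilting summand.

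For part (ii): by \eqref{ecube}, $E^{\otimes 3}\cong T(3)\oplus 2T(1)$, and $T(3)\cong L(3)$ is simple by (2.2b) (here $3=2\cdot2-1$, so $a=2$, $k=1$), with $\dim T(3)=4$. Then $L^2(T(3))$ has dimension $\binom{4}{2}=6$, sits inside $T(3)^{\otimes 2}=L(3)^{\otimes 2}$, and the latter decomposes via \eqref{tensortilt}-type bookkeeping for the Schur algebra in degree $6$ together with (2.2c)/(2.2d); I would extract $S^2(L(3))$ (dimension $10$, with a $\nabla$-filtration) and deduce the structure of $L^2(L(3))$. The expected outcome is that $L^2(L(3))$ contains an indecomposable summand isomorphic to a nonsplit extension involving $L(4)$ (or $L(2)$) and $L(0)$ whose head/socle pattern is incompatible with being $T(4)$, $T(2)$, or a sum of such; comparing Loewy lengths and the weight multiplicities against the (known, from (2.2b)--(2.2e)) structures of $T(0),T(2),T(4)$ in $\mathcal{P}_6$ gives the contradiction. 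The main obstacle in both parts is the precise determination of the module structure of $L^2(M)$ as an extension — i.e.\ showing that a particular composition factor genuinely lies in the socle (or head) rather than splitting off — which I would handle by a direct calculation with the generators $[x,y],\ [x,y,x],\dots$ of the free Lie algebra, tracking the $H$-action on an explicit basis of $L^2(M)$ inside $M^{\otimes 2}$ and computing the socle as the kernel of the relevant Schur-algebra idempotents.
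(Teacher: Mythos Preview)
Your proposal contains a concrete numerical error and is also more elaborate than necessary.

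First the error: in part (i) you assert that $L^2(T(2))$ has dimension $\binom{3}{2}=3$. But $T(2)$ is four-dimensional, not three-dimensional: by (2.2e) with $p=2$ and $i=j=0$ we have $0\to\Delta(2)\to T(2)\to\Delta(0)\to 0$, so $\dim T(2)=3+1=4$ (equivalently, $T(2)\cong E^{\otimes 2}$). Hence $\dim L^2(T(2))=\binom{4}{2}=6$, and your subsequent structural analysis of a ``$3$-dimensional module'' is off target.

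Second, and more importantly, the paper's argument avoids entirely the step you flag as the ``main obstacle'', namely pinning down the module structure of $L^2(M)$ and locating composition factors in specific Loewy layers. None of that is needed. The paper simply lists the weights of $L^2(T(2))$ and $L^2(T(3))$ directly from a weight basis of $T(2)$ (respectively $T(3)$), reads off the composition factors from the weight multiplicities, and then argues by a pure composition-factor count: if $L^2(T(2))$ were tilting it would have to contain $T(2)$ (by highest weight), but $T(2)$ has composition factors $L(2),L(0),L(0)$ while $L^2(T(2))$ has $L(2),L(2),L(0),L(0)$, leaving a lone $L(2)$ in the complement, which is not tilting. Similarly for $L^2(T(3))$: its composition factors are $L(4),L(2),L(0),L(0)$, whereas $T(4)$ already has factors $L(4),L(2),L(2),L(0),L(0)$ and so cannot be a summand. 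Thus no decomposition into tilting modules exists, with no appeal to socles, heads, or extension classes. Your route through $S^2(M)$, $\Gamma^2(M)$ and explicit socle computations would eventually reach the same conclusion, but it introduces genuine extra work (determining nonsplitness of extensions in characteristic $2$) that the composition-factor bookkeeping sidesteps completely.
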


\begin{proof}
(i) By \eqref{esquare}, $T(2) \cong E^{\otimes 2}$ and so $T(2)$ has basis $\{e_1,e_2, e_3, e_4\}$, where $e_1$ has weight 2, $e_2$ has weight $-2$ and $e_3, e_4$ have weight zero (for example, in terms of our basis for $E$
we may identify $e_1$ with $x \otimes x$, $e_2$ with $y \otimes y$ and so on).
One checks that  the only weights occurring in $L^2(T(2))$ are $0, \pm 2$ and that the composition factors are $L(2), L(2), L(0), L(0)$. Suppose for contradiction that $L^2(T(2))$ is a tilting module. Then, by consideration of highest weights, we must have that $T(2) \mid L^2(T(2))$. Since $T(2)$ has composition factors $L(2), L(0), L(0)$, this would leave only $L(2)$ in the complement which is non-tilting, thus contradicting the assumption that $L^2(T(2))$ is a tilting module.

(ii) By (2.2b) and (2.2a)  $T(3) \cong \nabla^3(E) \cong S^3(E)$. Thus $T(3)$ has basis $\{e_1,e_2, e_3, e_4\}$, where $e_1$ has weight 3, $e_2$ has weight $-3$, $e_3$ has weight 1 and $e_4$ has weight -1. It is then easy to see that the only weights occurring in $L^2(T(3))$ are $0, \pm 2, \pm 4$ and that the composition factors are $L(4), L(2), L(0), L(0)$. Suppose for contradiction that $L^2(T(3))$ is a tilting module. Then, by consideration of highest weights, we must have that $T(4) \mid L^2(T(3))$. However, $T(4)$ has composition factors $L(4), L(2), L(2), L(0), L(0)$, so it cannot be a direct summand of $L^2(T(3))$.
\end{proof}

\begin{theorem}
\label{Lietilt}
Let $r>6$. Then $L^r(E)$ is a tilting module if and only if $r$ is odd.
\end{theorem}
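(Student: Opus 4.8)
The plan is to prove the two directions separately. For the ``if'' direction: if $r$ is odd then $p=2\nmid r$, so $L^r(E)$ is a direct summand of $E^{\otimes r}$; since $E=T(1)$ and tilting modules are closed under tensor products, $E^{\otimes r}$ is a tilting module, hence so is its direct summand $L^r(E)$.

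For the ``only if'' direction I would argue the contrapositive: given an even integer $r>6$, exhibit a non-tilting indecomposable summand of $L^r(E)$. It is convenient to pass to $H=\SL(2,K)$, since if $L^r(E)$ were a tilting $KG$-module then $L^r(E)|_H$ would be a tilting $KH$-module; so it is enough to find a non-tilting summand of $L^r(E)|_H$. Assume first that $r\notin\{8,12\}$, so that $r\ge 10$ and $r/2\ne 6$. As every integer $\ge 5$ other than $6$ equals $2s+3t$ for some $s,t\ge 1$, we may so write $r/2$. By \eqref{Ralpheq} the module $L^2(D_{s,t})$ is a direct summand of $L^r(E)$ (it occurs with the positive multiplicity $m_{s,t}$), and $D_{s,t}$ is tilting by Corollary \ref{Dtilt}. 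By Corollary \ref{Dparts}, using $s\ge 1$, the module $D_{s,t}$ has a direct summand $T(\lambda)$ with $\lambda_1-\lambda_2=2$ when $t$ is even and with $\lambda_1-\lambda_2=3$ when $t$ is odd (in both cases the condition $\lambda_2\ge t$ holds because $s\ge 1$). Since $L^2(A\oplus B)\cong L^2(A)\oplus(A\otimes B)\oplus L^2(B)$, this gives $L^2(T(\lambda))\mid L^2(D_{s,t})\mid L^r(E)$; and as Lie powers commute with restriction, $L^2(T(\lambda))|_H\cong L^2\bigl(T(\lambda_1-\lambda_2)\bigr)$ is $L^2(T(2))$ or $L^2(T(3))$, which contains a non-tilting summand by Lemma \ref{Lienontilt}. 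Hence $L^r(E)|_H$, and therefore $L^r(E)$, is not tilting.

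It remains to handle $r=8$ and $r=12$, the only even $r>6$ for which $r/2$ is not of the form $2s+3t$ with $s,t\ge 1$, so that the preceding argument does not apply. For $r=8$ I would use Theorem B: the highest weight occurring in $L^8(E)\subseteq E^{\otimes 8}$ is $(7,1)$, with one-dimensional weight space (realised by a left-normed bracket in seven copies of $x$ and one $y$), and weight $(8,0)$ does not occur; since $(8,0)$ and $(7,1)$ are the only partitions of $8$ dominating $(7,1)$, any tilting decomposition of $L^8(E)$ must contain $T(7,1)$, contradicting Theorem B because $8$ is a power of $2$ other than $2$. (Alternatively, by \eqref{Ralpheq} the complement of the tilting module $m_{1,2}D_{1,2}$ in $L^8(E)$ is $L^4(\Delta(2,0))$, whose restriction to $H$ has character $[6]+[4]+2[2]$ by a Witt-formula weight count; subtracting $T(6)$ leaves $x^2+x^{-2}$, which is not a tilting character.) For $r=12$, the only pair with $s,t\ge 1$ and $2s+3t\mid 12$ is $(3,2)$, so \eqref{Ralpheq} reads $L^{12}(E)\cong L^6(D_{1,0})\oplus L^4(D_{0,1})\oplus m_{3,2}D_{3,2}$; in particular $L^4(D_{0,1})=L^4(\Delta(2,1))$ is a summand. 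As $\Delta(2,1)$ restricts to $\Delta(1)=E$ over $H$ and Lie powers commute with restriction, $L^4(\Delta(2,1))|_H\cong L^4(E)$; and by \eqref{Ralpheq} with $r=4$ we have $L^4(E)\cong L^2(D_{1,0})=\Lambda^2(\Delta(2,0))$, which restricts to the three-dimensional module $\Lambda^2(\Delta(2))$ of weights $2,0,-2$. Every tilting $KH$-module possessing a weight-$2$ vector has dimension at least $\dim T(2)=4$, so $\Lambda^2(\Delta(2))$ is not tilting; hence neither is $L^{12}(E)$.

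The main obstacle is precisely these two sporadic degrees. For all other even $r>6$ the decomposition \eqref{Ralpheq} supplies a copy of $T(2)$ or $T(3)$ (over $H$) inside a Lie square occurring in $L^r(E)$, after which Lemma \ref{Lienontilt} does the work; but $r=8$ and $r=12$ fall outside the range of this mechanism and have to be dealt with individually.
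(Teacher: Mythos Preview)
Your proof is correct and follows essentially the same strategy as the paper's: the ``if'' direction is identical, and for even $r>6$ you use \eqref{Ralpheq} to locate a summand $L^2(D_{s,t})$ of $L^r(E)$, pull out a copy of $T(2)$ or $T(3)$ over $H$, and invoke Lemma \ref{Lienontilt}; the paper does the same, choosing the specific pairs $(s,t)=((k-3)/2,1)$ for $k$ odd and $(s,t)=(k/2-3,2)$ for $k$ even (your parity split on $t$ is equivalent, since $t\equiv r/2\bmod 2$). For the two sporadic cases your treatment of $r=12$ via $L^4(D_{0,1})|_H\cong L^4(E)$ is exactly the paper's, and your alternative argument for $r=8$ (character of $L^4(\Delta(2))$ minus that of $T(6)$) is the paper's composition-factor computation in character language. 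The one point to flag is that your primary argument for $r=8$ appeals to Theorem~B, which in the paper is proved only in Section~4, after Theorem \ref{Lietilt}; there is no circularity, but the paper avoids this forward reference by doing the direct computation you give as an alternative.
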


\begin{proof} If $r$ is odd then $L^r(E)$ is a summand of the tensor power,
as explained in the introduction. To prove the converse, we consider the restriction to $H=\SL(2,K)$.
It suffices to prove that for all even $r$ with $r>6$ we have that $L^r(E)$ contains a non-tilting summand.
We will use the following argument. Suppose $W= U\oplus V$ as $KH$-modules, then $L^2(U)$ is a direct summand of
$L^2(W)$. Namely, it is clear that there is a vector space decomposition
$$L^2(W) = L^2(U)\oplus L^2(V) \oplus [U, V] $$
and it is then easy to check that each summand is a $KH$-module.

Write $r=2k$ and suppose first that $k$ is odd.
Since $r>6$ we have that $k>3$ and hence $L^2(\Delta_{\frac{k-3}{2},1})$ is a direct summand of $L^r(E)$, by \eqref{Ralpheq}. By Lemma \ref{Deltaparts} we see that $T(3)$ is a direct summand of $\Delta_{s,1}$ for all $s \geq 1$. Thus, by
the above argument,  $L^2(T(3))$ is a direct summand of $L^r(E)$ and hence $L^r(E)$ contains a non-tilting summand by Lemma \ref{Lienontilt}~(ii).

Next suppose that $k$ is even. For $k>6$ we have that $L^2(\Delta_{\frac{k}{2}-3,2})$ is a direct summand of $L^r(E)$, by \eqref{Ralpheq}. Then by Lemma \ref{Deltaparts} we see that $T(2)$ is a direct summand of $\Delta_{s,2}$ for all $s \geq 1$. Thus, again by the above argument,  $L^2(T(2))$ is a direct summand of $L^r(E)$ and hence $L^r(E)$ contains a non-tilting summand by Lemma \ref{Lienontilt}~(i).

It remains to deal with the cases $k=4$ and $k=6$. For $k=4$ equation \eqref{Ralpheq} gives
$$L^8(E) \cong L^4(\Delta(2)) \oplus \Delta_{1,2}.$$
By arguments similar to those in Lemma \ref{Lienontilt} it is easy to show that $L^4(\Delta(2))$ has composition factors
$$L(6), L(4), L(4), L(2), L(2), L(2),L(0), L(0), L(0), L(0).$$
Thus if $L^4(\Delta(2))$ is a tilting module, it must contain $T(6)$ as a direct summand. Since $T(6)$ has composition factors
$$L(6), L(4), L(4), L(2), L(2),L(0), L(0), L(0), L(0),$$
this would leave only $L(2)$ in the complement, which is non-tilting. Thus $L^8(E)$ contains a non-tilting summand.
For $k=6$, equation \eqref{Ralpheq} gives that $L^4(E)$ is a direct summand of $L^{12}(E)$ and it is easy to check that $L^4(E)$ has
composition factors $L(2), L(0)$, hence is not tilting,  so that $L^{12}(E)$ contains a non-tilting summand.
\end{proof}

Note that it can be shown by direct computation that $L^2(E) \cong T(1,1)$, $L^4(E) \cong \nabla(3,1)$ and $L^6(E) \cong T(5,1) \oplus T(3,3)$ as $\GL(2,K)$-modules.

\section{Lie powers of the natural module in arbitrary  characteristic}
We now return to the case where $K$ is an infinite field of arbitrary prime characteristic $p$. As before we let $G=\GL(2,K)$, $H=\SL(2,K)$ and let $E$ denote the natural $KG$-module with canonical basis $\{x,y\}$, as described in Section \ref{n=2}.

In this section we shall show that $T(r-1,1) \mid L^r(E)$ if and only if $r$ is not
a power of $p$. To do so,  we will exploit the fact that the highest weight in $L^r(E)$, namely $(r-1,1)$, has one-dimensional weight space.

\begin{remark}\label{4remark} \normalfont \ If $r$ is not divisible by $p$
then, as we have seen in \eqref{Lietilts}, $L^r(E)$ is a  tilting module. Since the
$(r-1,1)$ weight space is one-dimensional it follows immediately in this case that
$L^r(E)$ has a unique summand isomorphic to $T(r-1,1)$.
\end{remark}

Thus for the rest of this section we shall assume that $r$ is divisible by $p$.
The $(r-1,1)$ weight space of $L^r(E)$ is spanned by the left-normed Lie monomial
$$\zeta:= [\ldots [[y,x],x],\ldots,x].
$$
We claim that if $T(r-1,1)$ is a submodule of $L^r(E)$ then the $KG$-submodule of $L^r(E)$ generated by $\zeta$, denoted $G \zeta$,  is isomorphic to $\Delta(r-1,1)$. Indeed, suppose that $T(r-1,1)$ is a submodule of $L^r(E)$. Then $\zeta$ must be
contained in $T(r-1,1)$ and it follows that $G\zeta$ is a submodule of $T(r-1,1)$. By Remark \ref{weight}  this implies that
$G\zeta\cong \Delta(r-1,1)$.

\begin{lemma}\label{weyl}
Let $K$ be an infinite field of prime characteristic $p$ and let $r$ be a positive multiple of $p$. \\
(i) There is a homomorphism $\varphi: T(r-1,1) \to L^r(E)$, which maps the unique submodule of $T(r-1,1)$
isomorphic to $\Delta(r-1,1)$ onto the submodule $G\zeta$ of
$L^r(E)$.\\
(ii) $T(r-1,1)$ is a submodule of $L^r(E)$ if and only if
$G\zeta \cong \Delta(r-1,1)$.
\end{lemma}

\begin{proof}
(i) We have a short exact sequence of $KG$-modules
$$0\to U \to L^{r-1}(E)\otimes E \stackrel{\psi}\to L^r(E)\to 0
$$
(where $\psi$ maps $z_1\otimes z_2$ to $[z_1, z_2]$).
Since $p$ does not divide $r-1$ we may apply Remark \ref{4remark} to find that
$L^{r-1}(E)$ has a unique summand isomorphic to the tilting module
$T(r-2,1)$.
By (2.2g), noting that $E\cong T(1)$,
the module $T(r-2,1)\otimes E$
has $T(r-1,1)$ as a summand. The $(r-1,1)$ weight space of $L^{r-1}(E)\otimes E$
is one-dimensional, spanned by $\zeta'\otimes x$ where
$\zeta':= [\ldots [[y,x],x],\ldots,x]$ spans the
$(r-2,1)$ weight space of $L^{r-1}(E)$.
Hence $\zeta'\otimes x$ must lie in the summand $T(r-1,1)$. By
Remark \ref{weight},  the submodule $G(\zeta'\otimes x)$
of $T(r-1,1)$ is the unique submodule isomorphic to the Weyl module $\Delta(r-1,1)$.
Let $\varphi$ be the restriction of
$\psi$ to $T(r-1,1)$. Then $\varphi(\zeta'\otimes x)=\zeta$ and so $\varphi$ maps $G(\zeta'\otimes x)$ onto $G\zeta$. This completes the proof of part (i).

(ii) By the remark preceding the lemma, we know that if
$T(r-1,1)$ is a submodule of $L^r(E)$ (not necessarily via $\varphi$) then
$G\zeta$ must be isomorphic to $\Delta(r-1,1)$.
Conversely, if $G\zeta \cong \Delta(r-1,1)$ then the restriction
of $\varphi$ to $\Delta(r-1,1)$ is one-to-one. Since the socle of
$T(r-1,1)$ is simple (this follows from Lemmas 5 and 11 in~\cite{EH}),
and is contained in $\Delta(r-1,1)$ it follows that $\varphi$
is one-to-one and hence $T(r-1,1)$ is isomorphic to a submodule of $L^r(E)$.
\end{proof}

In order to determine whether $T(r-1,1) \mid L^r(E)$ we must
study the $KG$-submodule $G\zeta$ of $L^r(E)$ generated by $\zeta$.
By Lemma \ref{weyl}~(i), this is a factor module of $\Delta(r-1,1)$.
In particular its (non-zero) weight spaces are one-dimensional.
Our goal is to determine weight spaces of $G\zeta$ for sufficiently many
weights, so that we can identify its composition factors.
Certainly $L(r-1,1)$ occurs
since $\zeta \in G\zeta$.

Let $g=\left(\begin{matrix}s & u \cr t & v\end{matrix}\right) \in G$, then
$g\zeta = (ux+vy)({\rm ad}(sx+ ty))^{r-1}.$
If $s=0$ then this is just a scalar multiple of $[\ldots [[x,y],y],\ldots,y]$, which is in
a one-dimensional weight space of a weight of $L(r-1,1)$. So we assume now
 $s\neq 0$, and then without loss of generality, $s=1$.
Let $\alpha = x+ty$. Since $(\alpha)({\rm ad}(\alpha))^{r-1}=0$,
the elements
$(x)({\rm ad} \alpha)^{r-1}$ and $(y)({\rm ad}\alpha)^{r-1}$ are
linearly dependent, so to identify weight spaces of
$G\zeta$, it is enough to consider
 the second of these two.

Let $R_{\alpha}$ and $L_{\alpha}$ be right- and left multiplication by $\alpha$ in the associative tensor algebra on $E$. These operations commute and
$$({\rm ad} \alpha)^{r-1} = \sum_{k=0}^{r-1}{r-1\choose k} R_{\alpha}^{r-1-k}(-1)^kL_{\alpha}^k.$$
Hence we have
\begin{equation}
(y)({\rm ad} \alpha)^{r-1}
= \sum_{k=0}^{r-1} (-1)^k{r-1\choose k} (x+ty)^k y (x+ty)^{r-1-k}.
\label{**}
\end{equation}
If  $r=p^m$ we have
 ${p^m-1\choose k}\equiv (-1)^k \bmod p$, and \eqref{**} specializes to
\begin{equation}
(y)({\rm ad} \alpha)^{p^m-1}
= \sum_{k=0}^{p^m-1} (x+ty)^k y (x+ty)^{p^m-1-k}.
\label{***}
\end{equation}
%If $r$ is not a power of $p$ then there are always some binomial coefficients
%in \eqref{**} which are zero.
Note that \eqref{**} and \eqref{***} are expressions in the associative tensor algebra of $E$.
If we write either of these  as a polynomial in $t$, then for each $i$ the coefficient
of $t^i$ is a weight vector, with $i+1$ copies of $y$ and $r-(i+1)$
copies of $x$. Hence for different values of $i$ the weights are
distinct. Since the field is infinite, the module $G\zeta$ has a
basis consisting of the coefficients of the $t^i$ which are non-zero.

\begin{lemma}\label{ppower}  Assume $r>1$ is a power of $p$.
Then $G\zeta$ is a simple module isomorphic to $L(r-1,1)$.
Moreover $G\zeta \cong \Delta(r-1,1)$ if and only if $r=p$.
\end{lemma}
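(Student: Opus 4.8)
The plan is to combine the explicit formula \eqref{***} with the fact, recorded in Lemma \ref{weyl}(i), that $G\zeta$ is a factor module of the Weyl module $\Delta(r-1,1)$. Since every composition factor of a quotient of $\Delta(r-1,1)$ is of the form $L(\mu)$ with $\mu\leq(r-1,1)$, and since $\zeta\in G\zeta$ is a nonzero vector of weight $(r-1,1)$, the factor $L(r-1,1)$ must occur in $G\zeta$; hence $\dim G\zeta\geq\dim L(r-1,1)$, with equality forcing $G\zeta$ to have $L(r-1,1)$ as its only composition factor, i.e. $G\zeta\cong L(r-1,1)$. So the first assertion reduces entirely to the dimension count $\dim G\zeta=\dim L(r-1,1)$.

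To compute $\dim G\zeta$, recall from the discussion preceding the lemma that $G\zeta$ has a basis consisting of the nonzero coefficients $c_i$ of $t^i$ on the right-hand side of \eqref{***}, so $\dim G\zeta=\#\{i:c_i\neq0\}$. The key step is to evaluate $c_i$ by expanding $\sum_{k=0}^{p^m-1}(x+ty)^k\,y\,(x+ty)^{p^m-1-k}$ directly: a fixed associative word $w$ of length $p^m$ in $x,y$ containing exactly $i+1$ letters equal to $y$ arises in the coefficient of $t^i$ once for each occurrence of $y$ in $w$ — namely when that occurrence is taken to be the central $y$ and the remaining $i$ copies of $y$ supply the factor $t^i$ — and all contributions have sign $+1$. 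Hence $c_i=(i+1)\sum_w w$, the sum being over all such $w$, and this is nonzero precisely when $p\nmid i+1$. Counting over $0\leq i\leq p^m-2$ gives $\dim G\zeta=(p^m-1)-(p^{m-1}-1)=(p-1)p^{m-1}$.

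On the other side, restricting to $\SL(2,K)$ identifies $L(r-1,1)$ with the simple module $L(p^m-2)$, whose base-$p$ digits are $p-2,p-1,\ldots,p-1$; by Steinberg's tensor product theorem together with (2.2a)--(2.2b) this gives $\dim L(p^m-2)=(p-1)\cdot p^{m-1}$, matching the previous count, so $G\zeta\cong L(r-1,1)$. For the final clause, $G\zeta\cong\Delta(r-1,1)$ if and only if $\Delta(r-1,1)$ is simple, equivalently $\Delta(p^m-2)=L(p^m-2)$; by (2.2b) this requires $p^m-2=0$ or $p^m-1=ap^k$ with $2\leq a\leq p$ and $k\geq0$, and since $p\nmid p^m-1$ when $k\geq1$ one is forced into $k=0$ and then $m=1$, i.e. $r=p$ (which conversely works, as $p-2=(p-1)p^0-1$ for $p$ odd and $\Delta(0)=L(0)$ for $p=2$). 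The only step that genuinely requires care is the word-counting identity $c_i=(i+1)\sum_w w$; everything else is bookkeeping with base-$p$ expansions and the facts collected in Section \ref{poly}.
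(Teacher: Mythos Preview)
Your argument is correct and follows essentially the same route as the paper. Both proofs reduce to the dimension count via the coefficients of $t^i$ in \eqref{***}, use the identical word-counting observation that a word with $i+1$ copies of $y$ contributes once for each such $y$, and then match against $\dim L(p^m-2)=(p-1)p^{m-1}$ via Steinberg. The only cosmetic differences are that you state the full identity $c_i=(i+1)\sum_w w$ (hence determine exactly which $c_i$ vanish), whereas the paper records only the vanishing for $i\equiv -1\bmod p$ and uses it as an upper bound; and for the final clause you invoke (2.2b) while the paper simply compares $\dim\Delta(r-1,1)=p^m-1$ with $(p-1)p^{m-1}$.
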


\begin{proof} Let $r=p^m$, where $m \geq 1$. The idea is to show that $G\zeta$ is a module of the right dimension. Since $L(r-1,1)$ is a composition factor of $G\zeta$, it is enough to show that $\dim G\zeta \leq \dim L(r-1,1)$.
We use equation \eqref{***}.

(i) \ First we show that for $i=cp-1$, where $1\leq c \leq p^{m-1}$, the coefficient of $t^i$ in \eqref{***} is zero. Let $\eta$ be a monomial in $x$ and $y$ of weight $(p^m-cp, cp)$. We shall find
the coefficient of $\eta$ in \eqref{***}. The monomial $\eta$ has the form
$$\eta = x^{a_0}yx^{a_1}y\ldots yx^{a_{cp}},
$$
where $a_j \geq 0$ for all $j$ and $\sum_j a_j = p^m-cp$. (Note that there are
$cp$ copies of $y$ in total.) Then $\eta$ occurs precisely $cp$ times in \eqref{***}, namely for
the following values of $k$,
$$a_0, \  a_0+a_1+1, \ \ldots, \ a_0+a_1+ \ldots + a_{cp-1} + (cp-1).
$$
Hence the coefficient of $\eta$ in \eqref{***}  is equal to
$$cpt^{cp-1} \equiv 0 \bmod p.
$$
(ii) \ Since $G\zeta$ has basis consisting of the coefficients of the $t^i$ which are non-zero, we compute an upper bound for the dimension of $G\zeta$ as follows. The total number of possible  weight vectors  in $G\zeta$ is $p^m$, and we have shown that $p^{m-1}$ of these are zero. Therefore $\dim G\zeta \leq p^m-p^{m-1}$.
Thus it suffices to show that $\dim L(r-1,1) = p^m-p^{m-1}$.
This is clear from (2.2b) and (2.2a) if $m=1$. For $m\geq 2$ we have
$r-2= p-2 + \sum_{j=1}^{m-1} (p-1)p^j$. Thus, restricting to $SL(2,K)$ yields
$$L(r-2) \cong L(p-2) \otimes
(\bigotimes_{j=0}^{m-2} L(p-1)^{F^j}),
$$
by Steinberg's tensor product theorem.
Applying (2.2b) and (2.2a), we see that the dimension of $L(r-2)$ is $(p-1)p^{m-1}$, as required.
Finally we note that $\dim \Delta(r-1,1)= r-1=p^m-1$ and therefore
$\Delta(r-1,1) \cong L(r-1,1)$ if and only if $m=1$.
\end{proof}

\begin{lemma}\label{no-p-power}  Let $r$ be a positive multiple of $p$ that is not a power of $p$.
Then $G\zeta \cong \Delta(r-1,1)$.
\end{lemma}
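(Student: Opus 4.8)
The plan is to show that $\dim G\zeta=r-1$. By Lemma \ref{weyl}(i), $G\zeta$ is a homomorphic image of $\Delta(r-1,1)$, and $\dim\Delta(r-1,1)=\dim\Delta(r-2)=r-1$ by (2.2a); so $\dim G\zeta\le r-1$, and equality will force $G\zeta\cong\Delta(r-1,1)$. As explained before Lemma \ref{ppower}, $G\zeta$ is spanned by the coefficients $P_0,P_1,\ldots,P_{r-1}$ of the powers of $t$ in \eqref{**}, where $P_i$ is a weight vector of weight $(r-1-i,i+1)$; since these weights are pairwise distinct, $\dim G\zeta$ equals the number of $i$ with $P_i\ne0$. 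Here $P_{r-1}=0$, since collecting the coefficient of $y^r$ in \eqref{**} gives $\sum_{k=0}^{r-1}(-1)^k\binom{r-1}{k}=0$ (equivalently, weight $(0,r)$ does not occur in $\Delta(r-1,1)$). Thus it suffices to show that $P_i\ne0$ for every $i$ with $0\le i\le r-2$.

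Fix such an $i$, write $\alpha=x+ty$, and put $b_k=(-1)^k\binom{r-1}{k}\in K$ for $0\le k\le r-1$. Let $\eta$ be a monomial of weight $(r-1-i,i+1)$ in the tensor algebra on $E$, and let $q_1<q_2<\ldots<q_{i+1}$ be the positions (among $0,1,\ldots,r-1$) of its $i+1$ letters $y$. A copy of $\eta$ is produced by the summand $(-1)^k\binom{r-1}{k}\,\alpha^k y\,\alpha^{r-1-k}$ of \eqref{**} exactly when the central $y$ of that summand occupies one of the positions $q_j$, i.e.\ when $k=q_j$; the remaining $i$ letters $y$ of $\eta$ then come from factors $\alpha$ (each contributing one factor $t$), and the letters $x$ of $\eta$ from the other factors $\alpha$. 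Hence the coefficient of $t^i\eta$ in \eqref{**} is $\sum_{j=1}^{i+1}b_{q_j}$. Distinct monomials $\eta$ have distinct position-sets $\{q_1,\ldots,q_{i+1}\}$, and these range over all $(i+1)$-element subsets of $\{0,1,\ldots,r-1\}$; therefore $P_i\ne0$ if and only if some $(i+1)$-element subset $S\subseteq\{0,1,\ldots,r-1\}$ has $\sum_{k\in S}b_k\ne0$.

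Next I would use the elementary fact that if $b_0,\ldots,b_{r-1}$ are not all equal in $K$, then for every $s$ with $1\le s\le r-1$ some $s$-element subset of $\{0,\ldots,r-1\}$ has non-zero sum: if all such subsets summed to $0$, then for any $k\ne k'$ we could pick an $s$-element subset $S$ with $k\in S$, $k'\notin S$ and compare it with $(S\setminus\{k\})\cup\{k'\}$ to get $b_k=b_{k'}$, forcing all the $b_k$ equal. Applying this with $s=i+1$ (which lies in $\{1,\ldots,r-1\}$ since $0\le i\le r-2$) reduces everything to showing the $b_k$ are not all equal. As $b_0=1$, it is enough to rule out $\binom{r-1}{k}\equiv(-1)^k\pmod p$ for all $k$ with $0\le k\le r-1$. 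If these all held, the identity $(1+z)^{r-1}=\sum_{k=0}^{r-1}\binom{r-1}{k}z^k$ in $K[z]$ would read $(1+z)^{r-1}=\sum_{k=0}^{r-1}(-z)^k$, and multiplying by $1+z$ would give $(1+z)^r=1-(-z)^r$. Writing $r=\sum_j r_jp^j$ in base $p$ and using $(1+z)^{p^j}=1+z^{p^j}$, the left-hand side is $\prod_j(1+z^{p^j})^{r_j}$; if $a$ is the least index with $r_a\ne0$, then (since $1\le r_a\le p-1$, and every nonconstant term of the factors with $j>a$ has degree at least $p^{a+1}>p^a$) the coefficient of $z^{p^a}$ there equals $r_a\ne0$ in $K$. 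But $r\ge p^a$, with equality only if $r=p^a$; as $r$ is not a power of $p$ we get $0<p^a<r$, so the coefficient of $z^{p^a}$ in $1-(-z)^r$ is $0$, a contradiction. Hence the $b_k$ are not all equal, every $P_i$ with $0\le i\le r-2$ is non-zero, $\dim G\zeta=r-1$, and $G\zeta\cong\Delta(r-1,1)$.

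The step requiring the most care is the combinatorial bookkeeping in the second paragraph: one must correctly identify the coefficient of a given monomial in \eqref{**} as the subset-sum $\sum_j b_{q_j}$, and be careful about the omitted index $i=r-1$ and the boundary index $i=r-2$. The base-$p$ computation at the end is short because the single inequality $0<p^a<r$ simultaneously handles both ways $r$ can fail to be a power of $p$ — having two or more non-zero base-$p$ digits, and having a single non-zero digit exceeding $1$ — so no case analysis is needed.
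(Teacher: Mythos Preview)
Your proof is correct and shares the paper's overall plan: both show $\dim G\zeta=r-1$ by proving that each coefficient $P_i$ (for $0\le i\le r-2$) in the $t$-expansion of \eqref{**} is nonzero. The execution, however, differs in two places. The paper treats only the weights $(r-v,v)$ with $1\le v\le r/2$, recovers the remaining half via the group element interchanging $x$ and $y$, and forces the congruences $(-1)^w\binom{r-1}{w}\equiv 1$ by substituting two carefully chosen monomials $\eta$; its endgame is then a short case split on the parity of $r-1$ and on whether $p=2$. Your subset-sum lemma (if the $b_k$ are not all equal then for every $1\le s\le r-1$ some $s$-subset has nonzero sum) handles all weights uniformly and removes the need for the $x\leftrightarrow y$ symmetry; and your endgame, passing to the identity $(1+z)^r=1-(-z)^r$ in $K[z]$ and reading off the coefficient of $z^{p^a}$, replaces the case analysis by a single base-$p$ comparison. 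Both routes reduce to the same arithmetic fact that $\binom{r-1}{k}\equiv(-1)^k\pmod p$ for all $k$ forces $r$ to be a $p$-power, but yours reaches it with less bookkeeping and no appeal to symmetry.
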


\begin{proof}
We show that $G\zeta$ is a module of the right dimension. Since $G\zeta$ is a factor module of $\Delta(r-1,1)$, it is enough to show that $\dim G\zeta \geq \dim \Delta(r-1,1)=r-1$.
We use equation \eqref{**}.

Consider the weight $(r-v,v)$ with $1\leq v\leq r/2$. We will show
that the coefficient of $t^{v-1}$ in \eqref{**}
is non-zero, so that the $(r-v,v)$ weight space is non-zero.
By applying a group element that interchanges $x$ with $y$ (up to a sign)
it will follow that the $(v, r-v)$ weight space is also non-zero.
This will give in total $r-1$ distinct non-zero weight spaces, which will prove the
lemma.

Consider arbitrary
monomials $\eta = yx^{a_1}yx^{a_2}...yx^{a_v}$ of weight $(r-v,v)$ where $v$ is fixed. It is enough to show
that at least one of these monomials occurs with non-zero coefficient in \eqref{**}.

Such $\eta$ occurs in \eqref{**} for
$$k=0, \ a_1+1, \ (a_1+1)+(a_2+1), \ldots \, \  , \sum_{i=1}^{v-1} (a_i+1).
$$
The coefficient of $\eta$ in $(y)({\rm ad} \alpha)^{r-1}$ is therefore equal to
$$t^{v-1}\left(1 + (-1)^{a_1+1}{r-1\choose a_1+1} + \ldots + (-1)^{\sum_{i=1}^{v-1} (a_i + 1)}{r-1\choose \sum_{i=1}^{v-1} (a_i + 1)}\right).
$$
Suppose, for contradiction, that this coefficient is equal to zero modulo $p$ for every such monomial $\eta$. Then taking
$a_1= \ldots = a_{v-1}= 0$ gives
\begin{equation}
1 + \sum_{i=1}^{v-1}(-1)^i {r-1\choose i} \equiv 0
\ (\bmod p).
\label{(1)}
\end{equation}
Now take any  $w$ with $v-1 < w \leq r-1$, and take the monomial
$\eta$ with $a_1=1$, and $a_2= \ldots = a_{v-2}=0$ and
$a_{v-1} = w-v$. (Note that $\sum_{i=1}^{v-1} a_i \leq r-v$, so such monomial
is defined).
Since the coefficient of $\eta$ is equal to zero modulo $p$ we obtain
\begin{equation}
1 + \sum_{i=2}^{v-1}(-1)^i {r-1\choose i} + (-1)^w {r-1\choose w} \equiv 0 \
(\bmod p)
\label{(2)}.
\end{equation}
Subtracting \eqref{(2)} from \eqref{(1)} yields
\begin{equation}
(-1)^w{r-1\choose w} \equiv 1  \ (\bmod p),
\label{(dagger)}
\end{equation}
since $r\equiv 0$ ($\bmod p$).
Note that if we let $w$ vary this tells us that all binomial coefficients
$\binom{r-1}{w}$ are all non-zero modulo $p$, since we are allowed to take
$w$ to be any integer in the range $r/2 \leq w \leq r-1$.

If $r-1$ is odd and $p\neq 2$ then taking $w=r-1$ gives $-1\equiv 1$ ($\bmod p$);
a contradiction. If $r-1$ is odd and $p=2$, then all entries in the $(r-1)$-th row of
Pascal's triangle are equal to $1$ modulo 2 and it follows that
${r\choose k} \equiv 0$ ($\bmod 2$) for $1\leq k\leq r-1$. Thus $r$ must be a power of $2$, contradicting the hypothesis.

Now assume $r-1$ is even. By $\eqref{(dagger)}$,  the $(r-1)$-th row of Pascal's triangle
modulo $p$ has entries $1$ and $(-1)$ alternating, and we once more deduce
that ${r\choose k} \equiv 0$ ($\bmod p$) for $1\leq k \leq r-1$. Thus $r$ must be a power of $p$, contrary to the hypothesis.
\end{proof}

We shall show that whenever $T(r-1,1)$ is a submodule of $L^r(E)$, it is a direct summand. This will use the following:

\begin{proposition}\label{inj} Let $s\geq 0$.
\begin{itemize}
\item[(i)] The tilting module $T(s)$ is injective in degree $s$.
\item[(ii)] If  $s+2$ is not a power of $p$ then $T(s)$ is injective
in degree $s+2$.
\end{itemize}
\end{proposition}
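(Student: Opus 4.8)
I read "$T(s)$ is injective in degree $d$" as: $T(s)$---which lies in $\mathcal P_d$ precisely when $s\le d$ and $s\equiv d\pmod 2$, hence in both cases below---is an injective object of the highest weight category $\mathcal P_d$. Since the socle of $T(s)$ is simple (as in the proof of Lemma~\ref{weyl}, via~\cite{EH}), say $\operatorname{soc}T(s)=L(\sigma)$, the module $T(s)$ is injective in $\mathcal P_d$ if and only if it coincides with its injective hull $I_{\mathcal P_d}(L(\sigma))$ there, equivalently (as $T(s)\hookrightarrow I_{\mathcal P_d}(L(\sigma))$ always) if and only if
$$\dim T(s)=\dim I_{\mathcal P_d}(L(\sigma))=\sum_{m\in W_d}[\Delta(m):L(\sigma)]\,(m+1),$$
the last equality being BGG reciprocity in $\mathcal P_d$. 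So the whole proposition is a dimension count, and I would run it by induction on $s$ via the tensor factorisations (2.2c)--(2.2d).

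For (i) the base cases $0\le s\le 2p-2$ are immediate: by (2.2b), $T(s)=L(s)=\nabla(s)=I_{\mathcal P_s}(L(s))$ for $s\le p-1$; and for $p\le s\le 2p-2$, (2.2e) together with the composition factors of $\Delta(p+j)$ supplied by (2.2h) shows $T(s)$ is uniserial of length $3$ with socle $L(2p-2-s)$, so $\dim T(s)=2p=(2p-2-s+1)+(s+1)$, which is exactly the right-hand side above for $d=s$. For the inductive step, write $s=kp+i$ with $0\le i\le p-2$, $k\ge1$ (resp.\ $s=kp+(p-1)$, $k\ge0$), so that $T(s)\cong T(k-1)^F\otimes T(p+i)$ (resp.\ $T(s)\cong T(k)^F\otimes T(p-1)$). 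The tilting modules $T(p-1),\dots,T(2p-2)$ are injective as modules for the first Frobenius kernel $H_1$ of $H$, and using this I would prove the key lemma that $M^F\otimes T(p+i)$ is injective in $\mathcal P_{pk+i}$ whenever $M$ is injective in $\mathcal P_{k-1}$ (and likewise with $T(p-1)$ and $\mathcal P_k$): the twist by $F$ multiplies weights by $p$ and the fixed tensor factor raises the polynomial degree by $p+i$ (resp.\ $p-1$). The inductive hypothesis applied to $T(k-1)$ (resp.\ $T(k)$) then yields (i), and en route pins down $\sigma=\sigma(s)$ recursively as $\sigma(s)=p\,\sigma(k-1)+(p-2-i)$ (resp.\ $\sigma(s)=p\,\sigma(k)$).

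Granting (i), part (ii) amounts to $I_{\mathcal P_{s+2}}(L(\sigma(s)))=I_{\mathcal P_s}(L(\sigma(s)))$; since $W_{s+2}=W_s\cup\{s+2\}$, the BGG formula shows that adding the weight $s+2$ enlarges the injective hull exactly when $L(\sigma(s))$ is a composition factor of $\Delta(s+2)$, so (ii) reduces to: $L(\sigma(s))$ occurs in the Weyl module $\Delta(s+2)$ if and only if $s+2$ is a power of $p$. Feeding the recursion for $\sigma$ into the description of the composition factors of $\Delta(n)$ in terms of the $p$-adic digits of $n$ (obtained by iterating (2.2h)), one checks that when $s+2=p^m$ one has $\sigma(s)=(p-2)p^{m-1}$ and $L\bigl((p-2)p^{m-1}\bigr)$ indeed occurs in $\Delta(p^m)$, whereas for every other $s+2$ a comparison of $p$-adic digits forces $[\Delta(s+2):L(\sigma(s))]=0$. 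I expect this last $p$-adic bookkeeping---simultaneously controlling the socle weight $\sigma(s)$ of $T(s)$ and the composition factors of $\Delta(s+2)$ precisely enough to isolate $s+2=p^m$ as the unique obstruction---to be the main difficulty, with the $H_1$-injectivity of the "small" tilting modules $T(p-1),\dots,T(2p-2)$ the structural fact that keeps the tensor-product induction going.
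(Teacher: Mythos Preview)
Your proposal is correct and essentially matches the paper's proof: the same inductive scheme for (i) via (2.2c)--(2.2d), and the same BGG reduction for (ii) to whether the socle constituent of $T(s)$ occurs in $\Delta(s+2)$, the latter being handled in the paper as a separate inductive lemma using (2.2h)---precisely your ``$p$-adic bookkeeping.'' The remaining differences are cosmetic: the paper argues via projectivity and cites block-equivalence lemmas from~\cite{EH} rather than $H_1$-injectivity of the small tiltings, and compares $\Delta$-quotients of $P_{s+2}(u)$ and $P_s(u)$ rather than dimensions, but the content is identical. One small correction to your recursion: in the case $s=kp+(p-1)$ you should have $\sigma(s)=p\,\sigma(k)+(p-1)$ rather than $p\,\sigma(k)$, since the tensor factor $T(p-1)=L(p-1)$ contributes the bottom $p$-adic digit $p-1$ to the socle weight.
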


\begin{proof} A tilting module is injective if and only if it is projective
since it is self-dual. We show projectivity.

(i) \ For $0\leq s\leq p-1$ the Schur algebra $S(2,s)$ is semisimple, so
any module is projective.
For $p\leq s\leq 2p-2$,
$T(s)$ is projective, for example by~\cite[Lemma 20, Lemma 24]{EH}
(with $u=0$ and $s=0$ respectively). Now let
$s> 2p-2$ and assume true for all weights $< s$.

Suppose first that $s=kp+(p-1)$. Then $T(s)\cong T(p-1)\otimes T(k)^F$, by (2.2d). By induction,
$T(k)$ is projective in degree $k$. The functor $(-)^F\otimes T(p-1)$ is an equivalence
between the block containing $k$ and the block containing $s$
(see for example Lemma 1 in~\cite{EH}), hence
$T(s)$ is projective in degree $s$.

Now suppose that $s=kp+(p+i)$ where $0\leq i\leq p-2$. Then
$T(s)\cong T(p+i) \otimes T(k)^F$ by (2.2c). By induction, $T(k)$ is projective
in degree $k$, say $T(k) = P(u)$, the projective cover of the simple
module $L(u)$.
By Lemma 11 of~\cite{EH}, the module $T(s)$ has top $L(pu+j)$ where $i+j=p-2$.
Now the arguments in Lemma 20 and Lemma 24 of~\cite{EH}
show that $T(p+i)\otimes T(k)^F$ is projective in
degree $s$.

(ii) \ Assume  $s+2$ is not a power of $p$. By part~(i) we have that $T(s)$ is projective in degree $s$.
Let $T(s) = P_s(u)$, the projective module in degree $s$
with simple top $L(u)$.
In degree $s+2$ there is then a surjective homomorphism
$$ P_{s+2}(u) \stackrel{\pi}\to P_s(u),
$$
where $P_{s+2}(u)$ is the projective cover of $L(u)$ in degree $s+2$, since
$P_s(u)$ has simple top $L(u)$. To show this is an isomorphism, it suffices
to show that both modules have the same $\Delta$-quotients.
Let $[M:\Delta(t)]$ denote the number of quotients isomorphic
to $\Delta(t)$ in a $\Delta$-filtration of $M$  (if $M$ has $\Delta$-filtration,
this is well-defined).
By `BGG reciprocity' and duality, (see for example~\cite{CPS}),
we have that for any $t\leq r$ and $t\equiv r \bmod 2$,
$$[P_r(u) : \Delta(t)] = [\Delta(t):L(u)]
$$
where $[\Delta(t):L(u)]$ is the  multiplicity of $L(u)$ as a composition
factor of $\Delta(t)$.

Since $s \equiv s+2 \bmod 2$, it follows that if $t\leq s$ then $[P_r(u) : \Delta(t)]$ is the same
for $r=s$ and $r=s+2$. It remains to show that $\Delta(s+2)$ is not a $\Delta$-quotient of $P_{s+2}(u)$, or equivalently, that $L(u)$ does not occur as a composition factor of $\Delta(s+2)$. We know that
$L(u)$ is the socle of $T(s)$ and is therefore the socle of
$\Delta(s)$. Since $s+2$ is not a power of $p$ we can deduce
using (2.2h)
that $u\neq 0$.
The claim follows now from the next lemma.
\end{proof}

\begin{lemma} Suppose that $L(w)= {\rm soc} \Delta(k)$ with $w\neq 0$. If
$L(w)$ is a composition factor of $\Delta(k+2)$ then $k+2$ is a power of
$p$.
\end{lemma}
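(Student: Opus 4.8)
The plan is to prove the statement by strong induction on $k$ (equivalently on $k+2$), exploiting the short exact sequences in (2.2h) together with the Steinberg tensor product theorem, which relate a Weyl module $\Delta(m)$ for $H=\SL(2,K)$ to Weyl modules of highest weight roughly $m/p$. Throughout I would use two standard facts about $\SL(2,K)$: that Weyl modules have simple socle (so the $w$ in the statement is well defined, and more generally ${\rm soc}\,\Delta(m)$ is a single $L(\cdot)$); and that every composition factor $L(j)$ of $\Delta(m)$ satisfies $j\equiv m$ or $j\equiv -m-2\pmod p$ (the linkage principle; alternatively an easy induction from (2.2h) and the elementary isomorphism $\Delta(pn+p-1)\cong\Delta(n)^F\otimes L(p-1)$, which follows by contravariant duality from $S^{pn+p-1}(E)\cong S^n(E)^F\otimes S^{p-1}(E)$). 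I would also use that Frobenius twist induces an isomorphism of submodule lattices, so ${\rm soc}(M^F)=({\rm soc}\,M)^F$, and that $L(a)^F\otimes L(c)\cong L(ap+c)$ for $0\le c\le p-1$.

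The first step is a congruence argument. Since $L(w)$ is a composition factor of both $\Delta(k)$ and $\Delta(k+2)$, the second fact forces $w\equiv k$ or $-k-2\pmod p$, and also $w\equiv k+2$ or $-k-4\pmod p$; comparing the two pairs of possibilities shows that either $p=2$ or $k\equiv p-2\pmod p$. Since for $p=2$ the condition $k\equiv p-2$ just means $k$ even, it suffices to treat: (I) $k\equiv p-2\pmod p$; and (II) $p=2$ with $k$ odd, where the aim is to show the hypothesis $L(w)\mid\Delta(k+2)$ can never hold.

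For case (I), write $k=pn+(p-2)$. The small cases $k<p$ form the base of the induction: then $\Delta(k)=L(k)$, so $w=k$, and inspecting $\Delta(k+2)$ via (2.2h) shows $L(k)\mid\Delta(k+2)$ only when $k+2=p$. If $n\ge1$, the sequence (2.2h) with $i=p-2$, $j=0$ realizes $\Delta(n-1)^F$ as a submodule of $\Delta(k)$, so ${\rm soc}\,\Delta(k)=({\rm soc}\,\Delta(n-1))^F$, i.e. $w=pw'$ with $L(w')={\rm soc}\,\Delta(n-1)$; as $w\ne0$ we get $w'\ne0$ (hence $n\ge2$). Now $k+2=p(n+1)$, and (2.2h) with $i=0$, $j=p-2$ together with Steinberg shows that the composition factors of $\Delta(p(n+1))$ are $\{L(pb):L(b)\mid\Delta(n+1)\}$ together with factors of residue $p-2\pmod p$. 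Since $w=pw'\equiv0\pmod p$, for $p$ odd this forces $L(w)\mid\Delta(k+2)$ iff $L(w')\mid\Delta(n+1)=\Delta((n-1)+2)$, and the inductive hypothesis applied to $n-1<k$ gives that $n+1$ is a power of $p$, hence so is $k+2=p(n+1)$. For $p=2$ the two residue classes coincide, so $L(w)\mid\Delta(2(n+1))$ iff $L(w')$ is a composition factor of $\Delta(n)$ or of $\Delta(n+1)$; the first is excluded because $w'\equiv n-1\not\equiv n\pmod2$ while every weight of $\Delta(n)$ is $\equiv n$, and the second gives, by induction, that $n+1$, hence $k+2$, is a power of $2$. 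Case (II) is handled the same way: $\Delta(2n+1)\cong\Delta(n)^F\otimes L(1)$ gives ${\rm soc}\,\Delta(k)=L(2w_n+1)$ with $L(w_n)={\rm soc}\,\Delta(n)$, and $\Delta(k+2)\cong\Delta(n+1)^F\otimes L(1)$ has composition factors exactly $\{L(2b+1):L(b)\mid\Delta(n+1)\}$, so $L(w)\mid\Delta(k+2)$ would force $L(w_n)\mid\Delta(n+1)$, which is impossible since $w_n\equiv n\not\equiv n+1\pmod2$. This closes the induction.

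The main obstacle I anticipate is organizational rather than a single deep step: carrying out the congruence bookkeeping that isolates $k\equiv p-2\pmod p$ as the only surviving residue, then computing ${\rm soc}\,\Delta(pn+(p-2))=({\rm soc}\,\Delta(n-1))^F$ correctly from (2.2h) and the simplicity of Weyl socles, and dealing with the extra parity considerations and base cases that the prime $p=2$ imposes on the induction.
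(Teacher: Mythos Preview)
Your proof is correct and follows the same inductive scheme as the paper: reduce to $k\equiv p-2\pmod p$, use (2.2h) to write $w=pw'$ with $L(w')=\mathrm{soc}\,\Delta(n-1)$, show that $L(w')$ must occur in $\Delta(n+1)$, and apply the inductive hypothesis to $k'=n-1$. The differences are only in the supporting steps. The paper obtains $k\equiv -2\pmod p$ from Donkin's description of the blocks of the Schur algebra, and it excludes $L(w)$ from the piece $\Delta(n)^F\otimes L(p-2)$ of $\Delta(k+2)$ by invoking the multiplicity-freeness of $\Delta(k)$ (that piece is also the quotient in the sequence (2.2h) for $\Delta(k)$, and $L(w)$ already sits in the submodule $\Delta(n-1)^F$). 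You achieve both of these by the elementary residue observations you describe, together with the parity argument for $p=2$. The paper's base case is a short dimension count when $\Delta(k)$ is simple; yours is the direct inspection for $k<p$. Your version is slightly more self-contained, avoiding the external block citation and the (unreferenced) multiplicity-freeness fact, at the cost of treating $p=2$ separately; the paper's multiplicity-freeness argument makes the exclusion step uniform in $p$.
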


\begin{proof}
Suppose first that $\Delta(k)$ is simple, so that  $\Delta(k) = L(w)$. Then $L(w)$ takes up $k+1$ dimensions from
$\Delta(k+2)$, which only has dimension $k+3$.  It follows that $L(k+2)$ must
be two-dimensional,  which means that $k+2=p^a$ for some $a\geq 1$.

Now suppose that $\Delta(k)$ is not simple. Thus $k \geq p$, by (2.2b). We proceed by induction on $k$.
Since  $k$ and $k+2$ are  in the same block, we get that $k\equiv -2 \bmod p$.
This follows from the Theorem in~\cite{D-block}.
Thus we may write $k=pm+ p-2$ where
$m\geq 1$,  and by (2.2h), $L(w)$ is
a submodule of $\Delta(m-1)^F$. We see from this that $L(w)\cong L(v)^F$
where $w=pv$ with $v\neq 0$.

Then the end terms for the sequence (2.2h) of  $\Delta(k+2)$ are
 $\Delta(m+1)^F$ and $\Delta(m)^F\otimes L(p-2)$. Since $\Delta(pm+p-2)$
is  multiplicity-free,
$L(w)$ does not occur in $\Delta(m)^F\otimes L(p-2)$. So
 it must occur in $\Delta(m+1)^F$.
Now we have
$L(v)= {\rm soc} \Delta(m-1)$, and also
  $L(v)$ occurs in $\Delta(m+1)$. By the inductive hypothesis, $m+1=p^a$
for some $a\geq 1$. Therefore $k+2= p^{a+1}$ and the lemma is proved.
\end{proof}

\begin{thm2}
Let $K$ be an infinite field of characteristic $p$, $G = \GL(2,K)$ and let $E$ denote the natural $KG$ module. Then $T(r-1,1)$ is a summand of $L^r(E)$ if and only if either $r=p$ or $r$ is not a power of $p$.
\end{thm2}
\begin{proof} By Remark \ref{4remark} we have that $T(r-1,1) \mid L^r(E)$ whenever $r$ is not divisible by $p$. Thus we may assume that $r$ is a positive multiple of $p$. When $r=p$ it can also be shown that $T(p-1,1)$ is a summand of $L^p(E)$ using~\cite{BSt}. Indeed, it follows from ~\cite[Corollary 3.2 and Lemma 4.2]{BSt} that $L^p(E)$ has a direct summand isomorphic to $\nabla(p-1,1)$ and restriction to $\SL(2,K)$ yields $T(p-2) \cong \nabla(p-2) \mid L^p(E)$, by (2.2b). By the unique lifting of $T(p-2)$ to a tilting module for $G$ (see Section \ref{n=2}) it then follows that $T(p-1,1)$ is isomorphic to a $KG$-summand of $L^r(E)$. Thus we may assume that $r=pk$ where $k>1$.

Suppose first that $r$ is not a $p$-power. By Lemma \ref{no-p-power} and Lemma \ref{weyl}~(ii) we get that $T(r-1,1)$ is isomorphic to a submodule of $L^r(E)$. By Proposition \ref{inj}, $T(r-1,1)$ is injective in degree $r$ and hence it is a summand of $L^r(E)$.

Next suppose that $T(r-1,1)$ is a summand of $L^r(E)$. By Lemma \ref{weyl}~(ii)we have $G\zeta \cong \Delta(r-1,1)$. Suppose for contradiction that $r=p^m$, then by Lemma \ref{ppower} we have $G\zeta \cong L(r-1,1)$ and hence $m=1$, contradicting our assumption that $r=pk$ where $k>1$.
\end{proof}

We note that ~\cite[Corollary 3.2]{BSt} used in proof of Theorem B concerns the $p$th metabelian Lie power. The $r$th metabelian Lie power of the natural module, denoted $M^r(E)$, is certain a quotient of the $r$th Lie power $L^r(E)$ (for details, see \cite[section 1]{S} or \cite{BSt} for example). It was shown in ~\cite[Corollary 3.2]{BSt} that the $p$th metabelian Lie power occurs as a direct summand of the $p$th Lie power (see also \cite[Section 2]{BKS} for an explicit splitting map $M^p(E) \rightarrow L^p(E)$). One may wonder whether this quotient always occurs as a direct summand. We give a partial answer.

\begin{proposition}
Let $K$ be an infinite field of characteristic $p$, $G = \GL(2,K)$ and let $E$ denote the natural $KG$ module. Let $r>1$ be a positive integer that is not a power of $p$. Then $M^r(E)$ occurs as a direct summand
of $L^r(E)$ if and only if $r=2$ or $r=ap^k+1$ for some $2 \leq a\leq p$ and $k\geq 0$.
\end{proposition}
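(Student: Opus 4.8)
The plan is to pin down the module $M^r(E)$ explicitly and then run an argument close to that of Theorem B. First I would establish that $M^r(E)\cong\nabla(r-1,1)$ for every $r\geq 2$. For $r\geq 2$, $M^r(E)$ is the degree-$r$ part of the derived algebra of the free metabelian Lie algebra on $E$; since that algebra is metabelian, ${\rm ad}(x)$ and ${\rm ad}(y)$ commute on the derived algebra, which is therefore a module over the polynomial algebra $K[{\rm ad}(x),{\rm ad}(y)]$, and for rank two it is free of rank one on $[x,y]$ (equivalently $\dim M^r(E)=r-1$). Tracking weights --- $[x,y]$ has weight $(1,1)$, while ${\rm ad}(x)$ and ${\rm ad}(y)$ raise weights by $(1,0)$ and $(0,1)$ --- identifies $M^r(E)$ with $\det\otimes S^{r-2}(E)$, which is $\nabla(r-1,1)$ by (2.2a). (For $r=4$ this recovers $M^4(E)=L^4(E)\cong\nabla(3,1)$.) In particular, $M^r(E)$ has a one-dimensional $(r-1,1)$-weight space, lying inside its simple socle $L(r-1,1)$, and $L(r-1,1)$ is generated by that weight vector.

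Next I would reformulate the numerical condition. Restricting $\nabla(r-1,1)$ to $H=\SL(2,K)$ gives $\nabla(r-2)$, and by (2.2b) the modules $\Delta(r-2),\nabla(r-2),T(r-2),L(r-2)$ all coincide precisely when $r-2=0$ or $r-2=ap^k-1$ with $2\leq a\leq p$ and $k\geq 0$ --- that is, precisely when $r=2$ or $r=ap^k+1$. By the unique lifting to $\GL(2,K)$ this is the condition ``$\nabla(r-1,1)\cong\Delta(r-1,1)\cong T(r-1,1)$''. I also record that, for $r>1$ not a power of $p$, this condition forces $p\nmid r$: if $r=2$ then $p=2$, so $r=p$; and if $r=ap^k+1$ with $p\mid r$ then $p\mid ap^k+1$ forces $k=0$ and $a=p-1$, again $r=p$.

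For the forward direction, suppose $r=2$ or $r=ap^k+1$. Then $p\nmid r$ by the previous paragraph, so $L^r(E)$ is a tilting module with a unique summand isomorphic to $T(r-1,1)$ (Remark \ref{4remark}), and this summand is $\nabla(r-1,1)\cong M^r(E)$; hence $M^r(E)\mid L^r(E)$. Conversely, suppose $M^r(E)\mid L^r(E)$. If $p\nmid r$, then $L^r(E)$ is tilting, so its indecomposable summand $M^r(E)\cong\nabla(r-1,1)$ must be indecomposable tilting, hence $\cong T(r-1,1)$ by comparing highest weights, so $r=2$ or $r=ap^k+1$. If instead $p\mid r$ and $r$ is not a power of $p$, I claim the hypothesis is impossible. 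Write $L^r(E)=N\oplus C$ with $N\cong M^r(E)$. Since the $(r-1,1)$-weight space of $L^r(E)$ is one-dimensional, spanned by $\zeta$, and $N$ has a nonzero $(r-1,1)$-weight space, we get $\zeta\in N$; by the first paragraph $G\zeta$ is then the socle $L(r-1,1)$ of $N$. But Lemma \ref{no-p-power} gives $G\zeta\cong\Delta(r-1,1)$, so $\Delta(r-1,1)$ would be simple, forcing (second paragraph) $r=2$ or $r=ap^k+1$ and hence $p\nmid r$ --- a contradiction. Thus when $p\mid r$ (and $r$ is not a power of $p$) both sides of the equivalence fail, and the proof is complete.

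The step I expect to be the main obstacle is the very first one: identifying $M^r(E)$ as the \emph{costandard} module $\nabla(r-1,1)$, and not as $\Delta(r-1,1)$. The argument for the case $p\mid r$ hinges on the highest weight vector of $M^r(E)$ sitting in a proper (socle) submodule, so that it clashes with the computation $G\zeta\cong\Delta(r-1,1)$ of Lemma \ref{no-p-power}; if $M^r(E)$ were $\Delta(r-1,1)$ there would be no contradiction. Once $M^r(E)$ is correctly pinned down, everything else is the tilting-module bookkeeping already in place for Theorem B.
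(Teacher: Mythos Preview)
Your proof is correct and follows essentially the same approach as the paper: identify $M^r(E)\cong\nabla(r-1,1)$ (the paper simply cites \cite[Lemma 4.2]{BSt} for this, whereas you sketch the $\det\otimes S^{r-2}(E)$ argument directly), and then use the one-dimensionality of the $(r-1,1)$-weight space of $L^r(E)$ to show that $\nabla(r-1,1)\mid L^r(E)$ forces $\Delta(r-1,1)$ to be simple, which by (2.2b) is exactly the stated numerical condition. The only organisational difference is that you split on whether $p\mid r$ (using the tilting property of $L^r(E)$ in one case and Lemma~\ref{no-p-power} in the other), while the paper argues uniformly by locating the unique composition factor $L(r-1,1)$ at the top of $\Delta(r-1,1)$ inside the embedded $T(r-1,1)$; the underlying logic is the same.
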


\begin{proof}
Assume that $r$ is not a power of $p$. Then by Remark \ref{4remark}, Lemma \ref{weyl}~(ii) and Lemma \ref{no-p-power} we see that $L^r(E)$ has a submodule isomorphic to $T(r-1,1)$. Notice that $L(r-1,1)$ occurs as a composition factor of $T(r-1,1)$, namely as the top composition factor of the submodule $\Delta(r-1,1)$ of $T(r-1,1)$. Since the $(r-1,1)$ weight space of $L^r(E)$ is one-dimensional, this is the unique composition factor isomorphic to $L(r-1,1)$ in $L^r(E)$.

It was shown in~\cite[Lemma 4.2]{BSt} that for $r \geq 2$ the $r$th metabelian Lie power $M^r(E)$ is isomorphic to the dual Weyl module $\nabla(r-1,1)$ as a module for the general linear group. Suppose that $\nabla(r-1,1)$ occurs as a direct summand of $L^r(E)$ as a module for $GL(2,K)$, then in particular it is a submodule and hence its socle is a submodule of $L^r(E)$. Since the socle of $\nabla(r-1,1)$ is $L(r-1,1)$ it follows that $\Delta(r-1,1)$ must be simple, and by the results of section 2, so is the restriction to $SL(2,K)$. Thus, by (2.2b), we see that this holds if and only if $r-2 = 0$ or $r-2 = ap^k-1$ for some $2 \leq a\leq p$ and some $k\geq 0$.
\end{proof}
For $r=1$ we have, trivially, that $M^r(E)=L^r(E)$. Consider the case where $r>1$ is a power of $p$. Then the situation is different, and by Lemma \ref{ppower} the simple module $L(r-1,1)$ is in the socle of $L^r(E)$. We have seen that ~\cite[Corollary 3.2]{BSt} gives $M^p(E)$ is a direct summand of $L^p(E)$. We calculate some further examples for $p=2$. If $r=4$ then $\nabla(3,1)$ is a summand of $L^r(E)$; on the other hand $\nabla(7,1)$ is not a summand of $L^8(E)$. In general, for $r=p^m$ with $m \geq 2$ we do not know whether $M^r(E)$ occurs as a direct summand of $L^r(E)$.

\medskip
We conclude this section with the proof of Theorem A.
\begin{thm1}
Let $K$ be an infinite field of characteristic $2$, $G = \GL(2,K)$ and let $E$ denote the natural $KG$-module. Let $r$ be a positive integer greater than $6$ and $\lambda$ a $2$-regular partition of $r$ into at most two parts.
\begin{itemize}
\item[(i)] If $r$ is not a power of $2$ then $T(\lambda) \mid L^r(E)$ if and only if $\lambda \neq (r)$.
\item[(ii)] If $r$ is a power of $2$ then $T(\lambda) \mid L^r(E)$ if and only if  $\lambda \neq (r), (r-1, 1)$.
\item[(iii)] Let $1 \leq t_1 <  t_2 <  \ldots < t_k$ be such that
 $r=2s_i+3t_i$ with $s_i \geq 1$. Then $\bigoplus_{i=1}^k E^{\otimes t_i} \mid L^r(E)$, considered as modules for $\SL(2,K)$.
\end{itemize}
\end{thm1}

\begin{proof}
We first note that the $(r)$-weight space of $L^r(E)$ is zero. Thus, by Remark \ref{weight}, $T(r)$ is not a summand of $L^r(E)$ for any $r>1$. Parts (i) and (ii) now follow from Proposition \ref{mostparts} and Theorem B, whilst part (iii) follows from \eqref{Ralpheq} restricted to $\SL(2,K)$ and Lemma \ref{bound}.
\end{proof}

We note that Theorem A~(i) can also be obtained as a special case of~\cite[Theorem 6.8]{BJ}, which is stated for $\GL(n,K)$ modules. However, the methods used in~\cite{BJ} do not apply when $r=p^m$ or $r=2p^m$. Thus, for $n=2$ and $p=2$, we see that Theorem A~(ii) deals with the cases not covered by~\cite[Theorem 6.8]{BJ}. In the following section we give some partial results for $n=2$ and $p>2$ which are not covered by~\cite[Theorem 6.8]{BJ}. Thus we look at Lie powers of degrees $p^m$ and $2p^m$.

\section{Lie powers of degree $p^m$ or $2p^m$ for $p>2$}
We return temporarily to the case $G=\GL(n,K)$ and let $V$ denote the natural $KG$-module. Let $r>1$. Then for all $a, b \geq 1$ satisfying $r=a+b$ we have that $[L^a(V), L^b(V)]$ is a submodule of $L^r(V)$. If $a$ and $b$ are coprime then it follows from~\cite[Theorem 1]{VLS} that $[L^a(V), L^b(V)]$ is isomorphic to the tensor product $L^a(V)\otimes L^b(V)$ as a module for $\GL(n,K)$. Namely, there is a surjective homomorphism from this tensor product onto $[L^a(V), L^b(V)]$, and by~\cite{VLS} both modules have the same dimension. Hence whenever $T(\lambda)$ is a summand of this tensor product, and in addition is injective in degree $r$, it follows that $T(\lambda)$ is a summand of $L^r(V)$.
Therefore it will be advantageous to take temporarily $n=r$ so that all summands of $V^{\otimes r}$ are injective
(see for example 3.7 in~\cite{E}, $V^{\otimes r} \cong S^{(1^r)}(V)$). The truncation functor, see
~\cite[section 6.5]{G}, maps $V$ to $E$, $V^{\otimes r}$ to $E^{\otimes r}$ and $L^r(V)$ to
$L^r(E)$. Moreover, by~\cite{E} given a partition $\lambda=(\lambda_1,\lambda_2)$ , the truncation functor
takes the tilting module $T(\lambda)$ for $\GL(r,K)$ to the tilting module $T(\lambda)$ for $\GL(2,K)$.

\begin{thm3} Let $K$ be an infinite field of odd characteristic $p$, $G=\GL(2,K)$ and
let $E$ be the natural $KG$ module. Let $r>p$ and let $\lambda$ be a partition of $r$ into at most two parts.\\
(i) If  $r=p^m$ with $p>3$ then
then $T(\lambda) \mid L^r(E)$ if and only if $\lambda \neq (r), (r-1,1)$.\\
(ii) Let $r=p^m$ with $p=3$ and suppose $\lambda \neq (r), (r-1, 1),
 ((r+1)/2, (r-1)/2)$. Then $T(\lambda) \mid L^r(E)$.\\
(iii) Let $r=2p^m$ with $p>3$ and suppose $\lambda  \neq (r), (p^m, p^m)$. Then $T(\lambda) \mid L^r(E)$.\\
(iv) Let $r=2p^m$ with $p=3$ and suppose $\lambda \neq (r), (p^m, p^m), (p^m+1, p^m-1), (p^m+2, p^m-2)$. Then $T(\lambda) \mid L^r(E)$.
\end{thm3}

\begin{proof} Note that $r >p > 2$. Let
$V$ be the natural $GL(r,K)$-module, which truncates to $E$ (see the remark above).

\ (i) and (ii) \ Assume first that $r=p^m$.
Recall that the $(r)$-weight space of $L^r(E)$ is zero and hence $T(r)$ is not a summand of $L^r(E)$.
By Theorem B we also have that $T(r-1,1)$ does not occur.
Thus we must show that $T(\lambda)$ occurs when $\lambda_2\geq 2$, with the
exception of $\lambda = ((r+1)/2, (r-1)/2)$ when $p=3$.

Consider the submodule $[L^2(V),L^{r-2}(V)]$ of $L^r(V)$.
By the remarks preceding the theorem we have that $[L^2(V),L^{r-2}(V)] \cong L^2(V)\otimes L^{r-2}(V)$.
We note that since $p \nmid 2$ and $p \nmid r-2$, we may apply~\cite[Corollary 6.10]{BJ} to find $T(1,1) \mid L^2(E)$ and $T(\mu) \mid L^{r-2}(V)$ for all partitions $\mu=(\mu_1,\mu_2)$ of $r-2$ with $\mu_2>0$, except for
$\mu = ((r-1)/2, (r-3)/2))$ when $p=3$. Thus each such $T(1,1) \otimes T(\mu)$ is a submodule of $L^r(V)$ with highest weight $\lambda = (\mu_1+1, \mu_2+1)$. Since $T(1,1) \otimes T(\mu)$ it is a direct sum of tilting modules, it has $T(\lambda)$ as a direct summand. In fact, since $\lambda$ is a $p$-regular partition of $r$, we also have $T(\lambda) \mid V^{\otimes r}$, and since all summands of $V^{\otimes r}$ are injective in degree $r$ it follows that each such  $T(\lambda)$ occurs as a direct summand of $L^r(V)$. Applying the truncation functor now yields that $T(\lambda)$ occurs as a direct summand of $L^r(E)$ for all $\lambda=(\lambda_1, \lambda_2)$ with $\lambda_2 \geq 2$ except for $\lambda = ((r+1)/2, (r-1)/2))$ when $p=3$, as required.

\ (iii) \ Assume next that $r=2p^m$ and $p>3$.
Consider the submodule $[L^3(V),L^{r-3}(V)]$ of $L^r(V)$.
By the remarks preceding the theorem we have that $[L^3(V),L^{r-3}(V)] \cong L^3(V)\otimes L^{r-3}(V)$.
We note that since $p \nmid 3$ and $p \nmid r-3$, we may apply~\cite[Corollary 6.10]{BJ} to find $T(2,1)\mid L^3(V)$
and $T(\mu) \mid L^{r-3}(V)$ for all partitions $\mu=(\mu_1,\mu_2)$ of $r-3$ with $\mu_2>0$.
So each such  $T(2,1) \otimes T(\mu)$ is a submodule of $L^r(V)$ with highest weight $(\mu_1+2, \mu_2+1)$, giving that $T(\lambda)\mid T(2,1) \otimes T(\mu)$ for all partitions $\lambda=(\lambda_1,\lambda_2)$ of $r$ with $\lambda_2 \geq 2$ and $\lambda_1-\lambda_2 >0$. Since each such $\lambda$ is a $p$-regular partition of $r$ we again conclude that $T(\lambda)$ is injective in degree $r$ and hence $T(\lambda) \mid L^r(V)$ for all $\lambda\neq (r), (r-1,1), (p^m, p^m)$. Applying the truncation functor, yields $T(\lambda) \mid L^r(E)$ for all $\lambda\neq (r), (r-1,1), (p^m, p^m)$ and by Theorem B we also know that $T(r-1,1)\mid L^r(E)$.

\ (iv) \ Finally let $r=2p^m$ with $p=3$.
Consider the submodule $[L^5(V),L^{r-5}(V)]$ of $L^r(V)$.
By the remarks preceding the theorem we have that $[L^5(V),L^{r-5}(V)] \cong L^5(V)\otimes L^{r-5}(V)$.
We note that since $p \nmid 5$ and $p \nmid r-5$, we may apply~\cite[Corollary 6.10]{BJ} to find $T(4,1) \mid L^5(V)$
and $T(\mu) \mid L^{r-5}(V)$ for all partitions $\mu=(\mu_1,\mu_2)$ of $r-5$ with $\mu_2>0$ and $\mu_1-\mu_2>1$. So each such $T(4,1) \otimes T(\mu)$ is a submodule of $L^r(V)$ with highest weight $(\mu_1+4, \mu_2+1)$, giving
$T(\lambda) \mid T(4,1) \otimes T(\mu)$ for all partitions $\lambda$ of $r$ with $\lambda_2 \geq 2$ and $\lambda_1-\lambda_2>4$. Each such $T(\lambda)$ is again injective in degree $r$ giving $T(\lambda) \mid L^r(V)$ for all $\lambda\neq (r), (r-1,1), (p^m, p^m), (p^m+1, p^m-1), (p^m+2,p^m-2)$. Applying the truncation functor, yields $T(\lambda) \mid L^r(E)$ for all $\lambda\neq (r), (r-1,1), (p^m, p^m), (p^m+1, p^m-1), (p^m+2,p^m-2)$ and by Theorem B we also know that $T(r-1,1)\mid L^r(E)$.
\end{proof}

\end{document}